\newtheorem{theorem}{Theorem}
\newtheorem{corollary}[theorem]{Corollary}
\newtheorem{lemma}[theorem]{Lemma}
\begin{document}

\title{Subcritical branching processes in random environment with
immigration: survival of a single family}
\author{E.E.Dyakonova\thanks{%
Department of Discrete Mathematics, Steklov Mathematical Institute of
Russian Academy of Sciences, 8 Gubkin Street, 117 966 Moscow GSP-1, Russia
E-mail: elena@mi-ras.ru},\, V. A. Vatutin\thanks{%
Department of Discrete Mathematics, Steklov Mathematical Institute of
Russian Academy of Sciences, 8 Gubkin Street, 117 966 Moscow GSP-1, Russia,
E-mail: vatutin@mi-ras.ru}}
\date{}
\maketitle

\begin{abstract}
We consider a subcritical branching process in an i.i.d. random environment,
in which one immigrant arrives at each generation. We consider the event $%
\mathcal{A}_{i}(n)$ that all individuals alive at time $n$ are offspring of
the immigrant which joined the population at time $i$ and investigate the
asymptotic probability of this extreme event when $n\to\infty$ and $i$ is
either fixed, or the difference $n-i$ is fixed, or $\min(i,n-i)\to\infty.$
To deduce the desired asymptotics we establish some limit theorems for
random walks conditioned to be nonnegative or negative.\newline

\noindent \textbf{AMS 2000 subject classifications.} Primary 60J80;
Secondary 60G50.\newline

\noindent \textbf{Keywords.} Branching process, random environment,
immigration, conditioned random walk
\end{abstract}

\section{Introduction and main results}

We consider a branching process with immigration evolving in a random
environment. Individuals in such a process reproduce independently of each
other according to random offspring distributions which vary from one
generation to the other. In addition, an immigrant enters the population at
each generation. Denote by $\Delta $ the space of all probability measures
on $\mathbb{N}_{0}:=\{0,1,2,\ldots \}.$ Equipped with the metric of total
variation $\Delta $ becomes a Polish space. We specify on the Borel $\sigma$%
-algebra of $\Delta $ a probability measure $\mathbf{P}$.

Let $F$ be a random variable taking values in $\Delta $, and let $F_{n},n\in
\mathbb{N}:=\mathbb{N}_{0}\backslash \left\{ 0\right\} $ be a sequence of
independent copies of $F $. The infinite sequence $\mathcal{E}=\left\{
F_{n},n\in \mathbb{N}\right\} $ is called a random environment.

A sequence of $\mathbb{N}_{0}$-valued random variables $\mathbf{Y}=\left\{
Y_{n},\ n\in \mathbb{N}_{0}\right\} $ specified on a probability space $%
(\Omega ,\mathcal{F},\mathbf{P})$ is called a branching process with one
immigrant in random environment (BPIRE), if $Y_{0}=1$ and, given the
environment, the process $\mathbf{Y}$ is a Markov chain with
\begin{equation}
\mathcal{L}\left( Y_{n}|Y_{n-1}=y_{n-1},F_{i}=f_{i},i\in \mathbb{N}\right) =%
\mathcal{L}(\xi _{n1}+\ldots +\xi _{ny_{n-1}}+1)  \label{BasicDefBPimmigr}
\end{equation}%
for every $n\in \mathbb{N}$, $y_{n-1}\in \mathbb{N}_{0}$ and $%
f_{1},f_{2},...\in \Delta $, where $\xi _{n1},\xi _{n2},\ldots $ are i.i.d.
random variables with distribution $f_{n}.$ Thus, $Y_{n-1}$ is the $(n-1)$th
generation size of the population and $f_{n}$ is the offspring distribution
of an individual at generation $n-1$. It will be convenient to consider that
if $Y_{n-1}=y_{n-1}>0$ is the population size of the $(n-1)$th generation of
$\mathbf{Y}$ then first $\xi _{n1}+\ldots +\xi _{ny_{n-1}}$ individuals of
the $n$th generation are born and afterwards one immigrant enters the
population.

We will call an $(i,n)$-clan the set of individuals alive at generation $n$
and being children of the immigrant which entered the population at
generation $i$. We say that only the $(i,n)$-clan survives in $\mathbf{Y}$
at moment $n$ if $Y_{n}^{-}:=\xi _{n1}+\ldots +\xi _{ny_{n-1}}>0$ and all $%
Y_{n}^{-}$ particles belong to the $(i,n)$-clan. Let $\mathcal{A}_{i}(n)$ be
the event that only the $(i,n)$-clan survives in $\mathbf{Y}$ at moment $n$.
The aim of this paper is to study the asymptotic behavior of the probability
$\mathbf{P}\left( \mathcal{A}_{i}(n)\right) $ as $n\rightarrow\infty $ and $%
i $ varies with $n$ in an appropriate way for subcritical BPIRE's.

The problem we consider admits the following biological interpretation.
Assume, for instance, that each immigrant has a new type or belongs to a new
species. Then the realisation of the event $\mathcal{A}_{i}(n)$ means that
all individuals of the population existing at moment $n$ are offspring of
the immigrant entering the population at moment $i.$ This is a reflection of
the low genetic diversity of the population arising in the course of
evolution. For the critical BPIRE the probability of the event $\mathcal{A}%
_{i}(n)$ has been investigated in \cite{SV2019}.

Branching processes in random environment with one immigrant in each
generation were first analysed in the classical paper~\cite{KZS75} in
connection with studying properties of random walks in random environment.
Later on the model of BPIRE was used in different situations in \cite%
{Afan2012}, \cite{Afan2016}, \cite{Afan2016b}, \cite{Afan2018}. Note also
that the authors of \cite{DLVZ19} and \cite{LVZ19} have studied the tail
distribution of the life-periods of BPRIE's in the critical and subcritical
cases with an immigration law more general than in our case.

We consider, along with the process $\mathbf{Y}$, a standard branching
process $\mathbf{Z}=\left\{ Z_{n},\ n\in \mathbb{N}_{0}\right\} $ in the
random environment (BPRE) which, given $\mathcal{E}$ is a Markov chain with $%
Z_{0}=1$ and
\begin{equation}
\mathcal{L}\left( Z_{n}|Z_{n-1}=z_{n-1},F_{i}=f_{i},i\in \mathbb{N}\right) =%
\mathcal{L}(\xi _{n1}+\ldots +\xi _{nz_{n-1}})  \label{BPordinary}
\end{equation}%
for $n\in \mathbb{N}$, $z_{n-1}\in \mathbb{N}_{0}$ and $f_{1},f_{2},...\in
\Delta $.

To formulate the main results of the paper we introduce the so-called
associated random walk $\mathbf{S}=\left\{ S_{n},n\in \mathbb{N}_{0}\right\}
$. This random walk has increments $X_{n}=S_{n}-S_{n-1}$, $n\geq 1$, defined
as
\begin{equation*}
X_{n}=\log m\left( F_{n}\right) ,
\end{equation*}%
which are i.i.d. copies of the logarithmic mean offspring number $X:=\log $ $%
m(F)$ with%
\begin{equation*}
m(F):=\sum_{j=1}^{\infty }jF\left( \left\{ j\right\} \right) .
\end{equation*}

We associate with each measure $F$ the respective probability generating
function
\begin{equation*}
F(s):=\sum_{j=0}^{\infty }F\left( \left\{ j\right\} \right) s^{j}.
\end{equation*}%
We assume that the random probability generating function meets the
following restrictions.\newline

\noindent \textbf{Hypothesis A1}. The generating function $F(s)$ is
geometric with probability 1, that is%
\begin{equation}
F(s)=\frac{q}{1-ps}=\frac{1}{1+m(F)(1-s)}  \label{Frac_generating}
\end{equation}%
with random $p,q\in (0,1)$ satisfying $p+q=1$ and
\begin{equation*}
m(F)=\frac{p}{q}=e^{\log (p/q)}=e^{X}.
\end{equation*}

\noindent\ The BPRE\ is subcritical, i.e.
\begin{equation}
-\infty <\mathbf{E}X<0  \label{subcritical con}
\end{equation}%
and either $-\infty <\mathbf{E}\left[ Xe^{X}\right] <0$ (the strongly
subcritical case), or $\mathbf{E}\left[ Xe^{X}\right] =0$ (the intermediate
subcritical case), or there is a number $0<\beta <1$ such that
\begin{equation}
\mathbf{E}[Xe^{\beta X}]=0.  \label{weakly condi}
\end{equation}%
(the weakly subcritical case).

Note that the BPRE's mentioned in Hypothesis A1 do not exhaust all possible
cases of subcritical BPRE's. For instance, they do not include the
subcritical BPRE's where $\mathbf{E}\left[ Xe^{tX}\right] =\infty $ for all $%
t>0$ (see \cite{VZ2012}) or where $\mathbf{E}\left[ Xe^{tX}\right] <0$ for
all $0\leq t\leq \beta $ with $\beta =\sup \{t\geq 0:\mathbf{E}\left[ Xe^{tX}%
\right] <\infty \}\in (0,1)$ (see \cite{BV2017}).

One of the main tools in analyzing properties of BPRE and BPIRE is a change
of measure. We follow this approach and introduce a new measure $\mathbb{P}$
by setting, for any $n\in \mathbb{N}$ and any measurable bounded function $%
\psi :\Delta ^{n}\times \mathbb{N}_{0}^{n+1}\rightarrow \mathbb{R}$
\begin{equation}
\mathbb{E}[\psi (F_{1},\cdot \cdot \cdot ,F_{n},Y_{0},\cdot \cdot \cdot
,Y_{n})]:=\gamma ^{-n}\mathbf{E}[\psi (F_{1},\cdot \cdot \cdot
,F_{n},Y_{0},\cdot \cdot \cdot ,Y_{n})e^{\delta S_{n}}],
\label{Change_delta}
\end{equation}%
with
\begin{equation*}
\gamma :=\mathbf{E}[e^{\delta X}],
\end{equation*}%
where $\delta =1$ for strongly and intermediate subcritical BPIRE and $%
\delta =\beta $ for weakly subcritical BPIRE.

Observe that $\mathbf{E}[Xe^{\delta X}]=0$ translates into
\begin{equation*}
\mathbb{E}[X]=0.
\end{equation*}

\noindent \textbf{Hypothesis A2}. If a BPIRE\ is either intermediate or
weakly subcritical then the distribution of $X$ is nonlattice and belongs
with respect to $\mathbb{P}$ to the domain of attraction of a two-sided
stable law with index\textbf{\ }$\alpha \in (1,2]$.

Since $\mathbb{E}[X]=0$, Hypothesis A2 provides existence of an increasing
sequence of positive numbers
\begin{equation}
c_{n}=n^{1/\alpha }l_{1}(n)  \label{Def_a}
\end{equation}%
with slowly varying sequence $l_{1}(1),l_{1}(2),...$ such that, the
distribution law of $S_{n}/c_{n}$ converges weakly, as $n\rightarrow \infty $
to the mentioned two-sided stable law. Besides, under this condition there
exists a number $\rho \in (0,1)$ such that%
\begin{equation}
\lim_{n\rightarrow \infty }\mathbb{P}\left( S_{n}>0\right) =\rho .
\label{Def-ro}
\end{equation}

Recall that $\mathcal{A}_{i}(n)$ is the event that only the $(i,n)$-clan
survives in $\mathbf{Y}$ at moment $n$.

We first consider the strongly subcritical case.

\begin{theorem}
\label{T_strongly}Let $\mathbf{Y}$ be a strongly subcritical BPIRE
satisfying Hypotheses A1. Then

1) for any fixed $N$%
\begin{equation*}
\lim_{n\rightarrow \infty }\mathbf{P}\left( \mathcal{A}_{n-N}(n)\right)
=:r_{N}\in \left( 0,\infty \right) ;
\end{equation*}

2) there exists a constant $R\in \left( 0,\infty \right) $ such that%
\begin{equation*}
\lim_{n-i\rightarrow \infty }\gamma ^{-(n-i)}\mathbf{P}\left( \mathcal{A}%
_{i}(n)\right) =R.
\end{equation*}
\end{theorem}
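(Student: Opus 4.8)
\textbf{Proof plan for Theorem \ref{T_strongly}.}

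The plan is to express $\mathbf{P}(\mathcal{A}_i(n))$ in terms of the geometric offspring structure and reduce it to a functional of the associated random walk, then apply the change of measure \eqref{Change_delta} and classical renewal/random-walk asymptotics. First I would fix the environment $\mathcal{E}$ and compute $\mathbf{P}(\mathcal{A}_i(n)\mid\mathcal{E})$. Under Hypothesis A1 each clan started by the immigrant at time $j$ evolves as a BPRE in the environment $F_{j+1},F_{j+2},\dots$, and because the generating functions are fractional-linear, the composition $F_{j+1}\circ\cdots\circ F_n$ is again fractional-linear with parameters explicitly expressible through the increments $X_{j+1},\dots,X_n$ of $\mathbf{S}$. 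The event $\mathcal{A}_i(n)$ is the intersection of the event that the $(i,n)$-clan has at least one particle at time $n$ with the events that the clans born at times $i+1,\dots,n$ (including the ``child'' of each immigrant after time $i$, which must all die out by time $n$, and also the final offspring vector $Y_n^-$ is entirely from clan $i$) contribute nothing at generation $n$. A careful bookkeeping shows that, conditionally on $\mathcal{E}$, this probability factorizes into a product of the survival probability of the clan born at time $i$ times extinction probabilities of the later clans, and everything can be written through the quantities $\zeta_{j,n}:=\sum_{k=j+1}^{n}e^{-(S_k-S_j)}$ (or its reciprocal), which are standard in the fractional-linear BPRE calculus. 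The outcome should be a clean formula of the shape $\mathbf{P}(\mathcal{A}_i(n)\mid\mathcal{E})=g\big((S_k-S_i)_{i\le k\le n}\big)$ for an explicit bounded $g$, after which $\mathbf{P}(\mathcal{A}_i(n))=\mathbf{E}[g]$.

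Next, in part 1) with $i=n-N$, the functional $g$ depends only on the last $N$ increments $X_{n-N+1},\dots,X_n$, which are i.i.d.\ and independent of $n$; hence $\mathbf{P}(\mathcal{A}_{n-N}(n))=\mathbf{E}[g(X_{n-N+1},\dots,X_n)]$ is literally constant in $n$ once $n\ge N$ — actually one must be slightly careful because the clan born at time $n-N$ can have ancestors' contributions depending on the remote past only through the event that the clan survives to time $n$, but in the strongly subcritical regime the survival probability to time $n$ starting at time $n-N$ is dominated by a quantity converging as $n\to\infty$; I would use monotone/dominated convergence together with the a.s.\ convergence $\sum_{k>n-N}e^{-(S_k-S_{n-N})}\to$ a finite limit to get $r_N=\lim_n\mathbf{E}[g_n]$ and positivity/finiteness from the explicit formula (the limit is a genuine expectation of a strictly positive, bounded random variable).

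For part 2), with $m:=n-i\to\infty$, the key identity from step one gives $\mathbf{P}(\mathcal{A}_i(n))=\mathbf{E}[h(X_{1},\dots,X_{m})]$ for a bounded functional $h$ on $m$ increments (by stationarity we may shift the index so the clan is born at time $0$). The dominant contribution to $h$ is the survival probability of a subcritical BPRE over $m$ generations times a factor that the later clans die out; the leading term is of order $e^{\delta S_m}$-type weights, which is exactly what the change of measure \eqref{Change_delta} is designed to absorb. Writing $\mathbf{E}[h]=\gamma^{m}\,\mathbb{E}[h\,e^{-\delta S_m}]$ and noting that under $\mathbb{P}$ (in the strongly subcritical case $\delta=1$ and $\mathbf{E}[Xe^X]<0$, so $\mathbb{E}[X]<0$, i.e. $S_m\to-\infty$ a.s.\ under $\mathbb{P}$) the product $h\,e^{-\delta S_m}$ converges $\mathbb{P}$-a.s.\ to a finite limit $h_\infty$ as $m\to\infty$; then $\gamma^{-m}\mathbf{P}(\mathcal{A}_i(n))=\mathbb{E}[h\,e^{-\delta S_m}]\to\mathbb{E}[h_\infty]=:R$ by dominated convergence, with a uniform integrable bound coming from the fractional-linear formula (all the relevant quantities are bounded by $1$ or by convergent geometric-type series in $e^{X_k}$). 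Finiteness of $R$ is automatic from the bound; strict positivity follows because $h_\infty>0$ on a set of positive $\mathbb{P}$-measure, e.g.\ on $\{X_1<0,X_2<0,\dots\}$-type events.

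The main obstacle I anticipate is step one: getting the exact conditional formula for $\mathbf{P}(\mathcal{A}_i(n)\mid\mathcal{E})$ right, because one has to track simultaneously (i) the survival of clan $i$, (ii) the requirement that clans $i+1,\dots,n$ are extinct at generation $n$, and (iii) the subtle point that $Y_n^-$ (the particles just before the $n$-th immigrant) must be nonzero and entirely from clan $i$, which couples the ``last step'' offspring variable with the conditioning. Organizing this via the fractional-linear composition rule (so that all conditional probabilities become rational functions of $e^{-(S_k-S_i)}$) and then recognizing the resulting expression as something to which the change of measure applies cleanly is the crux; once the formula is in hand, both limits are routine applications of a.s.\ convergence of the relevant series plus dominated convergence.
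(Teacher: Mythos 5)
Your overall route is the same as the paper's: use the fractional--linear structure (Hypothesis A1) to get an explicit formula for $\mathbf{P}(\mathcal{A}_i(n))$ as an expectation of a functional of the associated walk, apply the change of measure (\ref{Change_delta}) with $\delta=1$, and then use a.s.\ convergence of the relevant exponential series plus dominated convergence. Part 1) matches the paper almost exactly, modulo a small sign confusion: the series that converges is (after time reversal, $\hat S_r:=S_n-S_{n-r}$) the sum $\sum_{r=0}^{n}e^{\hat S_r}$, not $\sum_{k>n-N}e^{-(S_k-S_{n-N})}$, and the dependence on the ``remote past'' comes from requiring the clans born \emph{before} time $n-N$ to be extinct at $n$ (not from the survival of the clan born at $n-N$); but the mechanism you describe (finite a.s.\ limit, dominated convergence, positivity from the explicit formula) is correct.

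Part 2) has a genuine gap that you should address. You write $\mathbf{P}(\mathcal{A}_i(n))=\mathbf{E}[h(X_1,\dots,X_m)]$ with $m=n-i$ and treat $h$ as a bounded functional of only those $m$ increments; this is not true. After the time reversal in (\ref{BasicEqua}), the integrand is
$$\frac{e^{S_{n-i}}}{\sum_{r=0}^{n-i-1}e^{S_r}}\cdot\frac{1}{\sum_{r=0}^{n}e^{S_r}},$$
and the second denominator runs up to $r=n$, i.e.\ it involves $i$ extra increments beyond the first $n-i$. In the paper this dependence is packaged into
$$\Lambda_i(x,y)=\mathbf{E}\Bigl[\tfrac{1}{1+y+x\sum_{r=0}^{i}e^{S_r}}\Bigr],\qquad h_i(x,y)=\tfrac{1}{1+y}\Lambda_i(x,y),$$
so that after the $\delta=1$ change of measure $\mathbf{P}(\mathcal{A}_i(n))=\gamma^{n-i}\mathbb{E}\bigl[h_i(e^{S_{n-i}},\sum_{r=1}^{n-i-1}e^{S_r})\bigr]$. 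The functional $h_i$ \emph{does} depend on $i$, and the theorem asserts that the limit $R$ does not. The reason the $i$-dependence drops out is the key step you have not supplied: under $\mathbb{P}$ (strongly subcritical, $\mathbb{E}X<0$) one has $e^{S_{n-i}}\to 0$ a.s., and $\Lambda_i(0,y)=\frac{1}{1+y}$ for \emph{every} $i$; the factor $x=e^{S_{n-i}}$ multiplying the $i$-dependent sum is killed in the limit, giving the $i$-free constant $R=\mathbb{E}\bigl[(\sum_{r=0}^{\infty}e^{S_r})^{-2}\bigr]$. Your plan's ``bounded functional of $m$ increments'' shortcut silently presupposes this, but it is exactly what needs to be proved, and is especially relevant if $i\to\infty$ simultaneously with $n-i\to\infty$ (which the $\lim_{n-i\to\infty}$ in the statement allows). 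Once you insert this observation, the rest of your dominated-convergence argument goes through and matches the paper.
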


The next theorem deals with intermediate subcritical case.

\begin{theorem}
\label{T_intermediate}Let $\mathbf{Y}$ be an intermediate subcritical BPIRE
meeting Hypotheses A1 and A2. Then

1) for any fixed $N$%
\begin{equation*}
\lim_{n\rightarrow \infty }\mathbf{P}\left( \mathcal{A}_{n-N}(n)\right)
=:r_{N}\in \left( 0,\infty \right) ;
\end{equation*}

2) there exist a slowly varying function $l(n)$ and a constant $R\in \left(
0,\infty \right)$ such that
\begin{equation*}
\lim_{n-i\rightarrow \infty }\gamma ^{-(n-i)}\left( n-i\right) ^{\rho }l(n-i)%
\mathbf{P}\left( \mathcal{A}_{i}(n)\right) =R .
\end{equation*}
\end{theorem}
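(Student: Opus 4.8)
\textbf{Step 1: an explicit formula for $\mathbf{P}(\mathcal{A}_{i}(n)\mid\mathcal{E})$.}
Condition on the environment $\mathcal{E}$. By Hypothesis A1 every offspring law is fractional--linear, and the descendants at time $n$ of the immigrant that entered at time $j$ form, given $\mathcal{E}$, an ordinary BPRE in the environment $F_{j+1},\dots,F_{n}$. Iterating for such a process the identity $g\circ f_{k}=e^{-X_{k}}g+1$, where $g(s)=(1-s)^{-1}$ and $f_{k}$ is the generating function acting at step $k$, gives
\[
\mathbf{P}(\text{the }j\text{-family is alive at }n\mid\mathcal{E})=\frac{e^{-S_{j}}}{\sum_{k=j}^{n}e^{-S_{k}}},\qquad \mathbf{P}(\text{the }j\text{-family is extinct at }n\mid\mathcal{E})=\frac{\sum_{k=j+1}^{n}e^{-S_{k}}}{\sum_{k=j}^{n}e^{-S_{k}}}.
\]
The families founded at times $0,1,\dots,n-1$ are conditionally independent given $\mathcal{E}$, so $\mathbf{P}(\mathcal{A}_{i}(n)\mid\mathcal{E})$ is the product of the first expression for $j=i$ and of the second for all $j\neq i$; this product telescopes and yields
\[
\mathbf{P}(\mathcal{A}_{i}(n)\mid\mathcal{E})=\frac{e^{-S_{i}}e^{-S_{n}}}{\big(\sum_{k=0}^{n}e^{-S_{k}}\big)\big(\sum_{k=i+1}^{n}e^{-S_{k}}\big)} .
\]

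\textbf{Step 2: reduction to a conditioned--walk expectation.}
Split the walk at time $i$, put $m=n-i$, let $R=(R_{0},\dots,R_{m})$ be the block $X_{i+1},\dots,X_{n}$ read backwards (so $R\stackrel{d}{=}S$ and $R_{m}=S_{n}-S_{i}$), set $A_{m}=\sum_{j=0}^{m-1}e^{R_{j}}\ge 1$ and $U_{i}=\sum_{k=0}^{i}e^{S_{i}-S_{k}}$; since $U_{i}$ depends only on $X_{1},\dots,X_{i}$ it is independent of $R$, and $U_{i}\stackrel{d}{=}\sum_{j=0}^{i}e^{S_{j}}$ by exchangeability. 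An elementary rearrangement turns the formula of Step~1 into $\mathbf{P}(\mathcal{A}_{i}(n)\mid\mathcal{E})=e^{R_{m}}\big/\big[(U_{i}e^{R_{m}}+A_{m})A_{m}\big]$. Conditioning on $X_{1},\dots,X_{i}$ and applying~(\ref{Change_delta}) with $\delta=1$ to the length-$m$ block (which absorbs the factor $e^{R_{m}}=e^{S_{n}-S_{i}}$) we obtain the key identity
\[
\gamma^{-(n-i)}\,\mathbf{P}(\mathcal{A}_{i}(n))=\mathbb{E}\!\left[\frac{1}{(U_{i}e^{R_{m}}+A_{m})A_{m}}\right],
\]
where now, under $\mathbb{P}$, $R$ is the driftless walk ($\mathbb{E}[X]=0$) of Hypothesis~A2, while $U_{i}$ keeps its $\mathbf{P}$--law. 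We shall use that $\mathbf{E}[U_{i}]=\sum_{j=0}^{i}\gamma^{j}\le (1-\gamma)^{-1}$ for all $i$ (here $\gamma=\mathbf{E}[e^{X}]<1$, because $t\mapsto\mathbf{E}[e^{tX}]$ is convex, equals $1$ at $0$, decreases there, and is minimal at $t=1$ as $\mathbf{E}[Xe^{X}]=0$), and that, coupling all $U_{i}$ on one space, $U_{i}\uparrow U_{\infty}:=\sum_{j\ge 0}e^{S_{j}}<\infty$ a.s. Part~1 of the theorem follows at once: with $m=N$ fixed and $i\to\infty$ the expectation is finite--dimensional and bounded by $1$, so dominated convergence gives $\gamma^{-N}r_{N}=\mathbb{E}\big[((U_{\infty}e^{R_{N}}+A_{N})A_{N})^{-1}\big]\in(0,\infty)$.

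\textbf{Step 3: the asymptotics as $m=n-i\to\infty$.}
We must show $m^{\rho}l(m)\,\gamma^{-(n-i)}\mathbf{P}(\mathcal{A}_{i}(n))\to R\in(0,\infty)$. The integrand in the key identity is at most $A_{m}^{-2}\le\min(1,e^{-2M_{m-1}})$, with $M_{m-1}=\max_{j<m}R_{j}$, and also at most $e^{-R_{m}}$; hence the expectation is carried by trajectories of $R$ staying $O(1)$ above the origin, and $\mathbb{P}(M_{m}\le b)$ is of order $m^{-\rho}\ell(m)$ with a fixed slowly varying $\ell$ (put $l=1/\ell$) by Sparre--Andersen asymptotics and the conditioned--walk limit theorems of the paper -- which is why the normalisation carries $m^{\rho}$. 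Fix $b>0$ and split according to $\{M_{m}\le b\}$. On $\{M_{m}>b\}$, combining the two pointwise bounds above with a level--uniform estimate $\mathbb{P}(M_{m}\le z)\le c\,g(z)\,m^{-\rho}\ell(m)$ ($g$ of at most polynomial growth) and summing over levels shows the contribution is $\le\varepsilon(b)\,m^{-\rho}\ell(m)$ with $\varepsilon(b)\to 0$, \emph{uniformly in }$i$. On $\{M_{m}\le b\}$ the walk conditioned to stay $\le b$ up to time $m$ converges on every finite horizon to the walk conditioned to stay $\le b$ forever, a transient process drifting to $-\infty$; thus $A_{m}\Rightarrow A_{\infty}^{(b)}:=\sum_{j\ge 0}e^{R_{j}}\in[1,\infty)$ while $R_{m}\to-\infty$, so $e^{R_{m}}\to 0$ and -- using $\mathbf{E}[U_{i}]\le(1-\gamma)^{-1}$ and independence -- the term $U_{i}e^{R_{m}}$ contributes only $o(m^{-\rho}\ell(m))$, again uniformly in $i$. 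Bounded convergence then gives
\[
m^{\rho}l(m)\,\mathbb{E}\!\left[\frac{1}{(U_{i}e^{R_{m}}+A_{m})A_{m}}\,;\,M_{m}\le b\right]\longrightarrow C(b)\,\mathbb{E}\big[(A_{\infty}^{(b)})^{-2}\big],
\]
a quantity \emph{independent of $i$} -- which is exactly why the same $R$ appears whether $i$ is fixed or $i\to\infty$. Letting $m\to\infty$ and then $b\to\infty$, the quantity $m^{\rho}l(m)\gamma^{-(n-i)}\mathbf{P}(\mathcal{A}_{i}(n))$ is squeezed between $C(b)\mathbb{E}[(A_{\infty}^{(b)})^{-2}]$ and $C(b)\mathbb{E}[(A_{\infty}^{(b)})^{-2}]+\varepsilon(b)$ in the limit; hence the limit $R=\lim_{b\to\infty}C(b)\mathbb{E}[(A_{\infty}^{(b)})^{-2}]$ exists, is positive (the lower bound at any fixed $b$ is positive, since $0<A_{\infty}^{(b)}<\infty$ a.s.\ and $C(b)>0$), and is finite (the upper bound at any fixed $b$ is finite).

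\textbf{Main obstacle.}
Steps~1 and~2 are bookkeeping, and the $b\to\infty$ interchange and the identification of $R$ in Step~3 are routine once the random--walk input is in hand. The crux is that input: the sharp asymptotics $\mathbb{P}(M_{m}\le b)\sim C(b)m^{-\rho}\ell(m)$ \emph{together with} a level--uniform upper bound $\mathbb{P}(M_{m}\le z)\le c\,g(z)m^{-\rho}\ell(m)$, and the convergence of a random walk conditioned to stay below a level to its transient, $-\infty$--drifting limit, with enough tail control to carry $A_{m}=\sum_{j<m}e^{R_{j}}$ to its a.s.\ finite limit. These are precisely the limit theorems for random walks conditioned to be nonnegative or negative announced in the abstract, and the entire argument is built around them.
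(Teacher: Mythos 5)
Your Steps 1 and 2 are correct and reproduce the paper's algebraic preparation. The conditional probability formula you obtain by telescoping is exactly Corollary \ref{C_fractional}, and the time reversal plus measure change with $\delta=1$ yields the paper's identity $\mathbf{P}(\mathcal{A}_{i}(n))=\gamma^{n-i}\,\mathbb{E}[h_{i}(e^{S_{m}},\sum_{r=1}^{m-1}e^{S_{r}})]$ with $h_{i}(x,y)=\frac{1}{1+y}\Lambda_{i}(x,y)$ and $\Lambda_{i}$ an independent $\mathbf{P}$-expectation over the inner walk of length $i$ (your $U_{i}e^{R_{m}}$ term). Your observation that $\gamma<1$ in the intermediate case is correct, and your Part 1 argument is a reformulation of the paper's.

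Step 3, however, departs from the paper and contains a genuine gap. The paper does not split on the level $b$ of the running maximum; it applies its Lemma \ref{L_Guiv1}, specifically (\ref{GuivStatement2}), whose proof decomposes on the \emph{location} $\tau(n)$ of the first minimum of the reversed walk $\bar S=-S$ and then invokes the $\mathbb{P}^{+}/\mathbb{P}^{-}$ machinery (Lemmas \ref{L_Newp41}, \ref{L_Newp42}) together with the exponential-functional asymptotics of Lemma \ref{L_Exponent_cond}. Crucially, with that decomposition the integrand's bound $h(a_{n},B_{1,n})\le (1+b_{n})^{-2}\le e^{2 S_{\tau(n)}}$ produces a series over $j=\tau(n)$ that is summable by Lemma \ref{L_Exponent_cond} — that is all the uniformity the paper needs. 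Your alternative decomposition on $\{M_{m}\le b\}$ requires inputs that the paper never supplies and you do not prove: (i) a level-uniform bound $\mathbb{P}(M_{m}\le z)\le c\,g(z)\,m^{-\rho}\ell(m)$ with $g$ of controlled growth and the \emph{same} slowly varying $\ell$ for all $z$, plus enough summability in $z$ to make $\varepsilon(b)\to 0$; (ii) convergence, with tail control, of the exponential functional $A_{m}=\sum_{j<m}e^{R_{j}}$ under the walk conditioned to stay below $b$, which is not the same object as the $\mathbb{P}^{-}$-convergence in (\ref{Hinf}); (iii) a proof that the $U_{i}e^{R_{m}}$ term contributes $o(m^{-\rho}\ell(m))$ \emph{uniformly} in $i$; and (iv) existence of the $b\to\infty$ limit $R=\lim_{b}C(b)\mathbb{E}[(A_{\infty}^{(b)})^{-2}]$, which is a nontrivial balance since $C(b)\sim V(-b)\to\infty$. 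You flag these as ``the random-walk input'' to be taken from the paper's conditioned-walk limit theorems, but the paper's Lemmas \ref{L_Newp41}/\ref{L_Newp42}/\ref{L_Exponent_cond} are stated for the Doob $h$-transforms $\mathbb{P}^{\pm}$ and the event $\{\tau(n)=n\}$ or $\{L_{n}\ge 0\}$, not for level conditioning, so they do not directly furnish (i)--(iv). The skeleton of your squeeze argument could in principle be made to work, but closing these four items would essentially require rebuilding Lemma \ref{L_Guiv1} in a different and somewhat less convenient coordinate system; as written the proof is incomplete at its crucial step.
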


The constants $c_n$ defined in (\ref{Def_a} play an important role in the
statement of our third theorem.

\begin{theorem}
\label{T_weakly}Let $\mathbf{Y}$ be a weakly subcritical BPIRE meeting
Hypotheses A1 and A2. Then

1) for any fixed $N$%
\begin{equation*}
\lim_{n\rightarrow \infty }\mathbf{P}\left( \mathcal{A}_{n-N}(n)\right)
=r_{N}\in \left( 0,\infty \right) ;
\end{equation*}

2) for any fixed $i$ there exists a constant $R_i\in \left( 0,\infty \right)$
such that
\begin{equation*}
\lim_{n-i\rightarrow \infty }\gamma ^{-(n-i)}\left( n-i\right) c_{n-i}%
\mathbf{P}\left( \mathcal{A}_{i}(n)\right) =R_i.
\end{equation*}

3) there exists a constant $R\in \left( 0,\infty \right)$ such that
\begin{equation*}
\lim_{\min(i,n-i)\rightarrow \infty }\gamma ^{-(n-i)}\left( n-i\right)
c_{n-i}\mathbf{P}\left( \mathcal{A}_{i}(n)\right) =R .
\end{equation*}
\end{theorem}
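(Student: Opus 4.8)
The plan is to reduce the event $\mathcal{A}_i(n)$ to an event involving the associated random walk and a "tilted" branching structure, and then to apply the conditioned-random-walk limit theorems advertised in the abstract. First I would compute $\mathbf{P}\left(\mathcal{A}_i(n)\right)$ explicitly using Hypothesis A1. Since all offspring generating functions are fractional-linear (geometric), compositions of them remain fractional-linear, so the probability that the clan of the immigrant entering at time $i$ contributes all $Y_n^-$ particles at time $n$, while every other clan (from immigrants at times $j \ne i$, $j \le n$, and from the founder) contributes nothing at time $n$, can be written in closed form in terms of the products $\prod m(F_k)$ over sub-intervals, i.e. in terms of the increments $X_k$ of $\mathbf{S}$. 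The upshot should be an expression of the shape
\begin{equation*}
\mathbf{P}\left(\mathcal{A}_i(n)\right) = \mathbf{E}\left[ g\left(S_i, S_{i+1}-S_n, \ldots\right) \right]
\end{equation*}
for an explicit $g$ built from geometric-extinction probabilities; schematically the founder and the immigrants before time $i$ must die out by time $n$ (contributing a factor governed by the random walk staying, roughly, "high" on $[0,i]$ viewed backwards from $n$), the immigrants after time $i$ must likewise leave no descendants at time $n$, and the $i$-th immigrant's line must survive. After the change of measure \eqref{Change_delta} with the appropriate $\delta$, the exponential weight $e^{\delta S_n}$ is absorbed, and what remains is an expectation under $\mathbb{P}$, under which the walk $\mathbf{S}$ is centered (for the intermediate/weakly cases) and in the domain of attraction of an $\alpha$-stable law.

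Next I would separate the two halves of the trajectory at the "pivot" time $i$. The event forces a kind of two-sided constraint: on the block $[0,i]$ the relevant functional behaves like the walk conditioned to stay negative (so that the pre-$i$ immigrants and the founder die out), and on the block $[i,n]$ it behaves like the walk conditioned to stay nonnegative (so that the post-$i$ immigrants die out while the $i$-th clan survives) — or the mirror image, depending on bookkeeping conventions. Because $\min(i,n-i)\to\infty$, both blocks are long, and I would invoke the limit theorems for random walks conditioned to be nonnegative/negative — existence of the renewal-type limits, the Tanaka-decomposition / ballot-type asymptotics with the $n^{-\rho}$ polynomial correction and the normalizing constants $c_n$ — to extract the asymptotics of each block's contribution separately. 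The factor $\gamma^{-(n-i)}$ is exactly the normalization coming from the change of measure on the block of length $n-i$; the polynomial factor $(n-i)$ and the $c_{n-i}$ come from, respectively, the probability that the conditioned walk on $[i,n]$ achieves the precise "survival of a single clan" configuration (a local-limit-type estimate contributing $1/c_{n-i}$) and a persistence probability contributing an extra $1/(n-i)$ beyond the usual $n^{-\rho}$. The block $[0,i]$, being asymptotically stationary under the conditioning as $i\to\infty$, contributes only a constant, which is why part 3) has a single constant $R$ whereas part 2) (with $i$ fixed) carries an $i$-dependent constant $R_i$: there the pre-$i$ block is a fixed finite functional rather than its $i\to\infty$ limit.

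Concretely, the key steps in order: (i) derive the closed-form expression for $\mathbf{P}\left(\mathcal{A}_i(n)\right)$ from the fractional-linear structure, isolating the factor that is a function of the walk increments; (ii) apply the change of measure \eqref{Change_delta} to turn it into a $\mathbb{P}$-expectation of a product of a "left" functional on $[0,i]$ and a "right" functional on $[i,n]$, coupled only through the value $S_i$ (and perhaps $S_n$); (iii) on the right block, use the conditioned-walk limit theorem plus a local limit theorem to show the right functional, suitably normalized by $\gamma^{-(n-i)}(n-i)c_{n-i}$, converges to a limit that depends on the entrance value; (iv) on the left block, use the corresponding theorem for walks conditioned to stay negative to show the left functional converges (as $i\to\infty$) to an $h$-transform stationary quantity; (v) combine via dominated convergence, checking uniform integrability in $S_i$, to get the product of the two limits, yielding the constant $R$. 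Part 2) is the special case where step (iv) is replaced by keeping $i$ fixed. I expect the main obstacle to be step (iii)–(v): proving the requisite uniform (in the pivot value) local-limit and conditioned-walk estimates with the correct polynomial and $c_n$ scaling, and justifying the interchange of limits so that the left and right asymptotics genuinely factor — in particular controlling the dependence on $S_i$ well enough that the two one-sided limit theorems can be glued without cross terms. The weakly subcritical case is the most delicate because $\delta=\beta<1$ means the tilted walk is only in the domain of attraction of a stable law (not Gaussian in general) and the conditioned-walk asymptotics carry the genuinely heavy-tailed $c_n$ normalization, so the local limit theorem used in step (iii) must be the stable local limit theorem for the conditioned walk, which is where the extra factor $c_{n-i}$ (versus the $\sqrt{n-i}$ one would get in the Gaussian/strongly subcritical analogue) enters.
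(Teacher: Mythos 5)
Your high-level skeleton (closed-form expression from the fractional-linear structure, change of measure, then conditioned-random-walk asymptotics) is the right one, and you correctly see that part~2) is part~3) with the pre-block held fixed rather than sent to infinity. But the crucial mechanism producing the $(n-i)c_{n-i}$ scaling is misidentified, and the main new technical ingredient of the paper is absent. You propose splitting the walk at the pivot time $i$, with the left block $[0,i]$ "conditioned to stay negative" and the right block $[i,n]$ "conditioned to stay nonnegative", and you attribute the $(n-i)c_{n-i}$ to a local-limit factor $1/c_{n-i}$ times a persistence factor $1/(n-i)$ from the right block. This is not how it works. In the paper's argument, after the time-reversal $\hat S_r := S_n - S_{n-r}$ and the change of measure \eqref{Change_delta} applied to the first $n-i$ (reversed) steps, the block corresponding to the original interval $[0,i]$ remains under $\mathbf P$, where $\mathbf E X<0$; there is no conditioning to stay negative, and it contributes simply the a.s.\ convergent limit $\Lambda_i\to\Lambda_\infty$ (equivalently $h_i\to h_\infty$). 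All of the $(n-i)c_{n-i}$ scaling comes from the block of length $n-i$ under the tilted zero-drift measure $\mathbb P$, where one writes $\mathbf P(\mathcal A_i(n)) = \gamma^{n-i}\,\mathbb E\bigl[e^{-(1-\beta)\bar S_{n-i}}h_i(e^{-\bar S_{n-i}},\sum_{r=1}^{n-i-1}e^{-\bar S_r})\bigr]$ and then invokes Lemma~\ref{L_Guiv1}: $\lim_n nc_n\,\mathbb E[g(a_n)h(a_n,B_{1,n})]=K_{g,h}$ with $g(x)=x^{1-\beta}$, $h=h_i$. That lemma is proved by decomposing the length-$(n-i)$ block at the \emph{random} time $\tau(n-i)$ of its minimum, applying the dual exponential-moment estimates $\mathbb E_x[e^{-\theta S_n};L_n\ge 0]\sim K U(x)\mathfrak m_{2\theta}/(nc_n)$ and $\mathbb E_x[e^{\theta S_n};\tau(n)=n]\sim K V(x)\mathfrak m_{1\theta}/(nc_n)$ from Lemma~\ref{L_Exponent_cond}, and then the $\mathbb P^+/\mathbb P^-$-convergence results of Lemmas~\ref{L_Newp41}--\ref{L_Newp42}. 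Your deterministic split at $i$ cannot substitute for the split at $\tau(n-i)$, and the bookkeeping "persistence $\times$ local limit $= 1/((n-i)c_{n-i})$" does not match: the nonnegativity persistence on a block of length $n-i$ is $\sim l_1(n-i)/(n-i)^{1-\rho}$, not $1/(n-i)$, and the compensating $(n-i)^{-\rho}$ comes from the negativity-persistence $\mathbb P(M_j<0)$ on the other side of the minimum, not from the $[0,i]$ block.

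A secondary inaccuracy: under Hypothesis~A1 (geometric offspring laws), there is no need to argue separately that pre-$i$ clans die out and the $i$-th clan survives; Corollary~\ref{C_fractional} collapses $\mathcal H_{i,n}$ into a single rational function of $a_n$ and the $b_j$'s, and everything is then a matter of estimating one expectation. Your "two-sided constraint at $i$" framing suggests more combinatorial separation between clans than actually occurs. To turn your sketch into a proof you would need (i) the explicit identity of Corollary~\ref{C_fractional} and the time-reversal leading to \eqref{BasicEqua}; (ii) the observation that after the change of measure the pre-block gives the deterministic function $h_i$ (with $h_i\to h_\infty>0$ as $i\to\infty$, justifying the passage from $R_i$ to $R$); and (iii), most importantly, the functional limit theorem for $nc_n\mathbb E[g(a_n)h(a_n,B_{1,n})]$ proved by splitting at the minimum, which your proposal does not supply.
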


The rest of the paper is organised as follows. In Section \ref{sec_aux_res}
we collect some auxiliary results dealing with explicit expressions for the
probability of the event $\mathcal{A}_{i}(n)$ and prove two conditional
limit theorems for random walks conditioned to stay nonnegative or negative.
Section \ref{Sec_NT} contains the proof of a limit theorem for certain
functionals constructed by driftless random walks. Section \ref{sec_nt2} is
dedicated to the proofs of Theorems \ref{T_strongly}--\ref{T_weakly}.

In the sequel we will denote by $C,C_{1}, C_{2},...$ constants which may
vary from line to line and by $K, K_{1},K_{2},...$ some fixed constants.

\section{Auxiliary results}

\label{sec_aux_res}

\subsection{Some identities}

Given the environment $\mathcal{E}=\left\{ F_{n},n\in \mathbb{N}\right\} $,
we construct the i.i.d. sequence of generating functions
\begin{equation*}
F_{n}(s):=\sum_{j=0}^{\infty }F_{n}\left( \left\{ j\right\} \right)
s^{j},\quad s\in \lbrack 0,1],
\end{equation*}%
and use below the convolutions of $F_{1},...,F_{n}$ specified for $0\leq
i\leq n-1$ by the equalities
\begin{equation*}
\left\{
\begin{array}{l}
F_{i,n}(s):=F_{i+1}(F_{i+2}(\ldots F_{n}(s)\ldots )),\quad \\
F_{n,i}(s):=F_{n}(F_{n-1}(\ldots F_{i+1}(s)\ldots )),%
\end{array}%
\right.
\end{equation*}%
and $F_{n,n}(s):=s$ for $i=n$.

Then we can express the probability of the event $\mathcal{A}_i(n)$
conditionally on the random walk $\mathbf{S}$ as follows:
\begin{equation}  \label{expr_PAin}
\mathbf{P}\left( \mathcal{A}_{i}(n)| \mathbf{S}\right) =\mathbf{E}\left[
(1-F_{i,n}(0))\prod_{k\neq i}^{n-1}F_{k,n}(0) \Bigg| \mathbf{S} \right] .
\end{equation}

For the sake of readability, put

\begin{equation*}
\mathfrak{h}_{n}(s):=(1-F_{0,n}(s))\prod_{k=1}^{n-1}F_{k,n}(s),
\end{equation*}%
and, for $0\leq i\leq n$ introduce the notation
\begin{eqnarray*}
&&a_{i,n}:=e^{S_{i}-S_{n}},\qquad a_{n}:=a_{0,n}=e^{-S_{n}} \\
&&b_{i,n}:=\sum_{k=i}^{n-1}e^{S_{i}-S_{k}},\quad
b_{n}:=b_{0,n}=\sum_{k=0}^{n-1}e^{-S_{k}}.
\end{eqnarray*}

We have the following equality:

\begin{lemma}
\label{L_represent2} Under Hypothesis $A1$%
\begin{equation*}
\mathfrak{h}_{n}(s)=\frac{1}{a_{n}\left( 1-s\right) ^{-1}+b_{n}}\frac{%
a_{n}\left( 1-s\right) ^{-1}}{a_{n}\left( 1-s\right) ^{-1}+b_{n}-b_{1}}.
\end{equation*}
\end{lemma}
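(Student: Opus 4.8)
The plan is to exploit Hypothesis A1, which says each $F_n(s)$ is a fractional-linear (geometric) generating function, and to use the fact that the class of fractional-linear maps is closed under composition. Concretely, write $F_n(s) = \frac{1}{1+m_n(1-s)}$ with $m_n = e^{X_n}$, so that in the variable $u := (1-s)^{-1}$ each $F_n$ acts by an affine map. First I would record the standard computation that for a composition $F_{k,n}$ one has
\begin{equation*}
\frac{1}{1-F_{k,n}(s)} = e^{S_n - S_k}\,(1-s)^{-1} + \sum_{j=k}^{n-1} e^{S_n - S_j} \cdot e^{-S_n}\big/ e^{-S_n} \; ,
\end{equation*}
or, cleaned up, $\dfrac{1}{1-F_{k,n}(s)} = e^{S_n-S_k}(1-s)^{-1} + \sum_{j=k}^{n-1} e^{S_j - S_k}$ — equivalently $1 - F_{k,n}(s) = \dfrac{1}{a_{k,n}^{-1}(1-s)^{-1}\,\cdot\, e^{-(S_n-S_k)} + b_{k,n}}$ after matching notation; I will be careful to align this with the paper's $a_{i,n}=e^{S_i-S_n}$, $b_{i,n}=\sum_{k=i}^{n-1}e^{S_i-S_k}$ so that $1-F_{k,n}(s)$ and $F_{k,n}(s)$ both come out as explicit ratios of the linear form $L_k(s) := a_{k,n}(1-s)^{-1} + b_{k,n}$. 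This is the one genuine computation and I would do it by induction on $n-k$: the base case $F_{n,n}(s)=s$ is immediate, and the inductive step is a single fractional-linear composition.

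Having this, note $F_{k,n}(s) = 1 - \dfrac{1}{L_k(s)} = \dfrac{L_k(s)-1}{L_k(s)} = \dfrac{a_{k,n}(1-s)^{-1}+b_{k,n}-1}{a_{k,n}(1-s)^{-1}+b_{k,n}}$, and $1 - F_{0,n}(s) = \dfrac{1}{L_0(s)} = \dfrac{1}{a_n(1-s)^{-1}+b_n}$, using $a_{0,n}=a_n$, $b_{0,n}=b_n$. Then
\begin{equation*}
\mathfrak{h}_n(s) = (1-F_{0,n}(s))\prod_{k=1}^{n-1}F_{k,n}(s)
= \frac{1}{L_0(s)}\prod_{k=1}^{n-1}\frac{L_k(s)-1}{L_k(s)} .
\end{equation*}
The key algebraic observation is that the linear forms telescope: from the definitions, $a_{k,n}(1-s)^{-1} = e^{S_k-S_n}(1-s)^{-1}$ and $b_{k,n} = \sum_{j=k}^{n-1}e^{S_k-S_j}$, so $e^{-(S_k)}\big(a_{k,n}(1-s)^{-1}+b_{k,n}\big) = e^{-S_n}(1-s)^{-1} + \sum_{j=k}^{n-1}e^{-S_j}$; hence $L_k(s) = e^{S_k}\big(a_n(1-s)^{-1} + \sum_{j\ge k}^{n-1} e^{-S_j}\big)$ and likewise $L_k(s)-1$ relates to the partial sum with one fewer term. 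Consequently the product $\prod_{k=1}^{n-1}\frac{L_k(s)-1}{L_k(s)}$ collapses: the numerator at index $k$ cancels against (a multiple of) the denominator at index $k+1$, and what survives is exactly the ratio of the "full" form $a_n(1-s)^{-1}+b_n$ (at the bottom level) to the form with the $k=0$ term removed, namely $a_n(1-s)^{-1}+b_n-b_1$ after shifting, times the compensating factor $a_n(1-s)^{-1}$ coming from the top of the telescope. Multiplying by the prefactor $1/L_0(s) = 1/(a_n(1-s)^{-1}+b_n)$ then yields precisely
\begin{equation*}
\mathfrak{h}_n(s) = \frac{1}{a_n(1-s)^{-1}+b_n}\cdot\frac{a_n(1-s)^{-1}}{a_n(1-s)^{-1}+b_n-b_1},
\end{equation*}
as claimed.

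The step I expect to be the main obstacle is bookkeeping in the telescoping product: one must track carefully how the $e^{S_k}$ factors and the "$-1$" shifts interact so that the collapse is exact and the residual factor $a_n(1-s)^{-1}$ in the numerator (rather than, say, $1$ or $b_1$) appears correctly. It is easy to be off by a factor of $e^{S_1}$ or to mismatch $b_1 = \sum_{k=0}^{0}e^{S_1-S_k}=e^{S_1}$ against the intended $b_{1,n}$-type quantity. I would guard against this by first verifying the identity by hand for $n=1$ and $n=2$ (where $b_1$ and the telescoping are transparent), and only then run the general induction, checking at each stage that both sides are fractional-linear in $(1-s)^{-1}$ with matching coefficients.
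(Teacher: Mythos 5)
Your proposal is correct and follows essentially the same route as the paper's proof: express each $1-F_{k,n}(s)$ via fractional-linear composition as the reciprocal of a linear form in $(1-s)^{-1}$ (the paper proves $F_{0,n}(s)=1-(a_n(1-s)^{-1}+b_n)^{-1}$ by induction and then shifts to general $i$), and then telescope the product $\prod_{k=1}^{n-1}\frac{a_n(1-s)^{-1}+b_n-b_{k+1}}{a_n(1-s)^{-1}+b_n-b_k}$. The preview formula you wrote has the exponent signs reversed (it should be $a_{k,n}=e^{S_k-S_n}$ and $b_{k,n}=\sum_{j=k}^{n-1}e^{S_k-S_j}$, not $e^{S_n-S_k}$, etc.), but you flag this explicitly as the bookkeeping hazard and your later ``key algebraic observation'' step uses the correct identities, so the argument goes through as planned.
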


\textbf{Proof.}
\begin{equation}
\ F_{i}(s)=\frac{q_{i}}{1-p_{i}s}=\frac{1}{1+e^{X_{i}}\left( 1-s\right) }
\end{equation}%
for all $i\in \mathbf{N}$. By induction we can prove that
\begin{equation}
F_{0,n}(s)=1-\frac{1}{a_{n}\left( 1-s\right) ^{-1}+b_{n}}  \label{Frac_F}
\end{equation}%
and, therefore,%
\begin{eqnarray}
F_{i,n}(s) &=&1-\frac{1}{a_{i,n}\left( 1-s\right) ^{-1}+b_{i,n}}  \notag \\
&=&1-\frac{a_{i}}{a_{n}\left( 1-s\right) ^{-1}+b_{n}-b_{i}}=\frac{%
a_{n}\left( 1-s\right) ^{-1}+b_{n}-b_{i+1}}{a_{n}\left( 1-s\right)
^{-1}+b_{n}-b_{i}}.  \label{expr_Fin1}
\end{eqnarray}%
Thus,%
\begin{eqnarray*}
\mathbf{h}_{n}(s) &=&\frac{1}{a_{n}\left( 1-s\right) ^{-1}+b_{n}}%
\prod_{j=1}^{n-1}\frac{a_{n}\left( 1-s\right) ^{-1}+b_{n}-b_{j+1}}{%
a_{n}\left( 1-s\right) ^{-1}+b_{n}-b_{j}} \\
&=&\frac{1}{a_{n}\left( 1-s\right) ^{-1}+b_{n}}\frac{a_{n}\left( 1-s\right)
^{-1}}{a_{n}\left( 1-s\right) ^{-1}+b_{n}-b_{1}}.
\end{eqnarray*}%
This ends the proof.

To conclude this section, we will provide an expression in terms of $a_{i}$%
's and $b_{i}$'s for the random variable
\begin{equation*}
\mathcal{H}_{i,n}:=\left( 1-F_{i,n}(0)\right) \prod_{j\neq
i}^{n-1}F_{j,n}(0).
\end{equation*}

\begin{corollary}
\label{C_fractional} Under Hypothesis $A1$ for any $i=1,2,...,n-1$
\begin{equation*}
\mathcal{H}_{i,n}=\frac{a_{i}}{a_{n}+b_{n}-b_{i+1}}\frac{a_{n}}{a_{n}+b_{n}}.
\end{equation*}
\end{corollary}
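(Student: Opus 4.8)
The plan is to read $\mathcal{H}_{i,n}$ off the explicit formula for the iterated generating functions that was already derived inside the proof of Lemma~\ref{L_represent2}, the only mechanism needed being a telescoping cancellation. First I would put $s=0$ in (\ref{expr_Fin1}), so that $(1-s)^{-1}=1$; recalling that in that proof $a_{j}=e^{-S_{j}}$ and $b_{j}=\sum_{k=0}^{j-1}e^{-S_{k}}$ (hence $b_{0}=0$ and $b_{j+1}-b_{j}=e^{-S_{j}}$), this yields, for every $0\le j\le n-1$,
\begin{equation*}
F_{j,n}(0)=\frac{a_{n}+b_{n}-b_{j+1}}{a_{n}+b_{n}-b_{j}},\qquad 1-F_{i,n}(0)=\frac{a_{i}}{a_{n}+b_{n}-b_{i}} .
\end{equation*}
I would also make explicit the convention that the product in $\mathcal{H}_{i,n}$ runs over $j\in\{0,1,\dots,n-1\}\setminus\{i\}$, so that together with the distinguished factor at $j=i$ every index of $\{0,\dots,n-1\}$ is used, exactly as in $\mathfrak{h}_{n}$.

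Next I would observe that the full product collapses by telescoping,
\begin{equation*}
\prod_{j=0}^{n-1}F_{j,n}(0)=\prod_{j=0}^{n-1}\frac{a_{n}+b_{n}-b_{j+1}}{a_{n}+b_{n}-b_{j}}=\frac{a_{n}+b_{n}-b_{n}}{a_{n}+b_{n}-b_{0}}=\frac{a_{n}}{a_{n}+b_{n}} ,
\end{equation*}
since each factor $a_{n}+b_{n}-b_{j}$ with $1\le j\le n-1$ occurs once in the numerator and once in the denominator and cancels, leaving $a_{n}+b_{n}-b_{n}=a_{n}$ on top (the numerator at $j=n-1$, as $b_{(n-1)+1}=b_{n}$) and $a_{n}+b_{n}-b_{0}=a_{n}+b_{n}$ at the bottom. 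Since each $F_{j,n}(0)$ is strictly positive (because $a_{n}>0$), deleting the single factor $F_{i,n}(0)$ gives
\begin{equation*}
\prod_{j\neq i}^{n-1}F_{j,n}(0)=\frac{1}{F_{i,n}(0)}\prod_{j=0}^{n-1}F_{j,n}(0)=\frac{a_{n}+b_{n}-b_{i}}{a_{n}+b_{n}-b_{i+1}}\cdot\frac{a_{n}}{a_{n}+b_{n}} .
\end{equation*}

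Finally I would multiply by $1-F_{i,n}(0)=a_{i}/(a_{n}+b_{n}-b_{i})$; the factor $a_{n}+b_{n}-b_{i}$ cancels and what remains is exactly
\begin{equation*}
\mathcal{H}_{i,n}=\frac{a_{i}}{a_{n}+b_{n}-b_{i+1}}\,\frac{a_{n}}{a_{n}+b_{n}} ,
\end{equation*}
the asserted identity. (Alternatively one can avoid re-deriving the telescoped product and instead substitute $s=0$ in Lemma~\ref{L_represent2} after relabelling the distinguished index, but the computation above seems the most transparent.) There is no genuine obstacle here; the only points requiring a little attention are the identification of the auxiliary quantities $a_{i}$ and $b_{i}$ used implicitly in the proof of Lemma~\ref{L_represent2}, the bookkeeping of the boundary values $b_{0}=0$ and $b_{n}$, and the convention that the product ranges over $\{0,\dots,n-1\}\setminus\{i\}$ rather than $\{1,\dots,n-1\}\setminus\{i\}$.
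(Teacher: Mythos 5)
Your proof is correct and takes essentially the same route as the paper: substitute $s=0$ in (\ref{expr_Fin1}), rewrite $\mathcal{H}_{i,n}$ as $\frac{1-F_{i,n}(0)}{F_{i,n}(0)}\prod_{j=0}^{n-1}F_{j,n}(0)$, telescope the full product to $a_n/(a_n+b_n)$, and cancel the factor $a_n+b_n-b_i$. Your explicit remarks on the product index convention $\{0,\dots,n-1\}\setminus\{i\}$ and the identities $b_0=0$, $b_{j+1}-b_j=a_j$ simply make precise what the paper leaves tacit.
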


\begin{proof}
If $i=0$ then the desired statement is a direct consequence of Lemma \ref%
{L_represent2}, as $\mathcal{H}_{0,n}=\mathbf{h}_{n}(0).$ If $i=1,2,...,n-1$
$\ $then\ the needed statement follows from (\ref{expr_Fin1}), by taking $%
s=0:$
\begin{eqnarray*}
\mathcal{H}_{i,n} &:=&\frac{\left( 1-F_{i,n}(0)\right) }{F_{i,n}(0)}%
\prod_{j=0}^{n-1}F_{j,n}(0) \\
&=&\frac{a_{i}}{a_{n}+b_{n}-b_{i+1}}\prod_{k=0}^{n-1}\frac{%
a_{n}+b_{n}-b_{j+1}}{a_{n}+b_{n}-b_{j}}=\frac{a_{i}}{a_{n}+b_{n}-b_{i+1}}%
\frac{a_{n}}{a_{n}+b_{n}}.
\end{eqnarray*}%
.
\end{proof}

\bigskip

\subsection{Measures $\mathbb{P}_{x}^{+}$ and $\mathbb{P}_{x}^{-}$}

The random variables
\begin{equation}
M_{n}:=\max \left( S_{1},...,S_{n}\right) ,\quad L_{j,n}:=\min \left(
S_{j},S_{j+1},...,S_{n}\right) ,\quad L_{n}:=L_{0,n}  \label{def_LM}
\end{equation}%
and the moment of the first minimum on the interval $[0,n]$ of the random
walk $\mathbf{S}:$
\begin{equation}
\tau (n):=\min \{0\leq k\leq n:S_{k}=L_{n}\}  \label{def_tau(n)}
\end{equation}%
play important role in this section.

To go further we need to perform two more changes of measure using the
right-continuous functions $U:\mathbb{R}$ $\rightarrow \lbrack 0,\infty )$
and $V:\mathbb{R}$ $\rightarrow \lbrack 0,\infty )$ specified by%
\begin{equation*}
U(x):=1+\sum_{n=1}^{\infty }\mathbb{P}\left( S_{n}\geq -x,M_{n}<0\right) ,\
x\geq 0;U(x)=0,\ x<0,
\end{equation*}%
\begin{equation*}
V(x):=1+\sum_{n=1}^{\infty }\mathbb{P}\left( S_{n}<-x,L_{n}\geq 0\right) ,\
x\leq 0;V(x)=0,\ x>0.
\end{equation*}

It is known (see, for instance, \cite{4h}\ and \cite{ABKV}) that for any
oscillating random walk
\begin{equation}
\mathbb{E}\left[ U(x+X);X+x\geq 0\right] =U(x),\quad x\geq 0,  \label{Mes1}
\end{equation}%
and
\begin{equation}
\mathbb{E}\left[ V(x+X);X+x<0\right] =V(x),\quad x\leq 0.  \label{Mes2}
\end{equation}

Let $\mathcal{E}=\left\{ F_{1},F_{2},...\right\} $ be a random environment
and let $\mathcal{F}_{n}$ be the $\sigma $-algebra of events generated by
the random sequences $F_{1},F_{2},...,F_{n}$ and $Y_{0},Y_{1},...,Y_{n}$.
The $\sigma $-algebras $\left\{ \mathcal{F}_{n},n\geq 1\right\} $ form a
filtration $\mathfrak{F}$. Clearly, the increment $X_{n},n\geq 1,$ of the
random walk $\mathbf{S}$ is measurable with respect to $\mathcal{F}_{n}$.
Using the martingale properties (\ref{Mes1})-(\ref{Mes2}) of $U$ and $V$ we
introduce, for each $n$ a probability measure $\mathbb{P}_{(n)}^{+}$ on the $%
\sigma $-algebra $\mathcal{F}_{n}$ in a standard way (see, for instance,
\cite{GV2017}, Chapter 7) by means of the density
\begin{equation*}
d\mathbb{P}_{(n)}^{+}:=U(S_{n})I\left\{ L_{n}\geq 0\right\} d\mathbb{P}.
\end{equation*}%
This and Kolmogorov's extension theorem show that, on a suitable probability
space there exists a probability measure $\mathbb{P}^{+}$ on $\mathfrak{F}$
such that
\begin{equation}
\mathbb{P}^{+}|\mathcal{F}_{n}=\mathbb{P}_{(n)}^{+},\ n\geq 1.
\label{DefMeasures}
\end{equation}

In the sequel we allow for arbitrary initial value $S_{0}=x$. Then, we write
$\mathbb{P}_{x}$ and $\mathbb{E}_{x}$ for the corresponding probability
measures and expectations. Thus, $\mathbb{P}=\mathbb{P}_{0}$ and $\mathbb{E}=%
\mathbb{E}_{0}.$ This agreement allows us to rewrite (\ref{DefMeasures}) as
\begin{equation*}
\mathbb{E}_{x}^{+}\left[ O_{n}\right] :=\frac{1}{U(x)}\mathbb{E}_{x}\left[
O_{n}U(S_{n});L_{n}\geq 0\right] ,\ x\geq 0,
\end{equation*}%
for every $\mathcal{F}_{n}$-measurable random variable $O_{n}$.

Similarly, $V$ gives rise to probability measures $\mathbb{P}_{x}^{-},x\leq
0 $, which can be defined via:
\begin{equation*}
\mathbb{E}_{x}^{-}\left[ O_{n}\right] :=\frac{1}{V(x)}\mathbb{E}_{x}\left[
O_{n}V(S_{n});M_{n}<0\right] ,\ x\leq 0.
\end{equation*}

By means of the measures $\mathbb{P}_{x}^{+}$ and $\mathbb{P}_{x}^{-}$, we
investigate the limit behavior of certain conditional distributions.

First we recall some known results concerning properties of the random
variables $M_{n}$ and $L_{n}$.

\begin{lemma}
\label{L_min_max} (see, for instance, Lemma 2.1 in \cite{4h}) If Hypothesis
A2 is valid then there exist a slowly varying function $l_{1}(n)$ and a
constant $\kappa>0$ such that, as $n\rightarrow \infty $
\begin{equation}
\mathbb{P}\left( L_{n}\geq 0\right) \sim \frac{l_{1}(n)}{n^{1-\rho }};\
\mathbb{P}\left( M_{n}<0\right) \sim \frac{\kappa}{n^{\rho }l_{1}(n)}.
\label{AsymMin}
\end{equation}
\end{lemma}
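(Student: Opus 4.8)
The plan is to reduce both estimates in (\ref{AsymMin}) to classical fluctuation theory for the driftless random walk $\mathbf{S}$, which under Hypothesis A2 is centered (recall $\mathbb{E}[X]=0$) and attracted to a stable law, hence oscillating. Since $S_{0}=0$, the event $\{L_{n}\geq0\}$ coincides with $\{S_{1}\geq0,\dots,S_{n}\geq0\}=\{\sigma>n\}$, where $\sigma:=\min\{k\geq1:S_{k}<0\}$ is the first strict descending ladder epoch, and $\{M_{n}<0\}=\{S_{1}<0,\dots,S_{n}<0\}=\{\nu>n\}$, where $\nu:=\min\{k\geq1:S_{k}\geq0\}$ is the first weak ascending ladder epoch. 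The Sparre Andersen identity then gives, for $|z|<1$,
\begin{equation*}
\sum_{n\geq0}\mathbb{P}(\sigma>n)z^{n}=\exp\Bigl(\sum_{n\geq1}\tfrac{z^{n}}{n}\mathbb{P}(S_{n}\geq0)\Bigr),\qquad \sum_{n\geq0}\mathbb{P}(\nu>n)z^{n}=\exp\Bigl(\sum_{n\geq1}\tfrac{z^{n}}{n}\mathbb{P}(S_{n}<0)\Bigr).
\end{equation*}

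Next I would feed in the positivity asymptotics. Since the increments are nonlattice, $\mathbb{P}(S_{n}=0)=0$ for $n\geq1$, so (\ref{Def-ro}) yields $\mathbb{P}(S_{n}\geq0)=\mathbb{P}(S_{n}>0)\to\rho$ and $\mathbb{P}(S_{n}<0)\to1-\rho$. Writing $\mathbb{P}(S_{n}\geq0)=\rho+\varepsilon_{n}$ with $\varepsilon_{n}\to0$, and invoking the elementary fact that $t\mapsto\exp\bigl(\sum_{n\geq1}\tfrac{1}{n}(1-1/t)^{n}\varepsilon_{n}\bigr)$ is slowly varying at infinity whenever $\varepsilon_{n}\to0$, one gets
\begin{equation*}
\sum_{n\geq0}\mathbb{P}(\sigma>n)z^{n}=(1-z)^{-\rho}\Lambda\bigl(\tfrac{1}{1-z}\bigr),\qquad \sum_{n\geq0}\mathbb{P}(\nu>n)z^{n}=(1-z)^{-(1-\rho)}\widetilde{\Lambda}\bigl(\tfrac{1}{1-z}\bigr)
\end{equation*}
with $\Lambda,\widetilde{\Lambda}$ slowly varying. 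Multiplying the two generating functions and using $\mathbb{P}(S_{n}\geq0)+\mathbb{P}(S_{n}<0)=1$ shows their product equals $(1-z)^{-1}$, hence $\Lambda(t)\widetilde{\Lambda}(t)\equiv1$.

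Finally I would apply Karamata's Tauberian theorem followed by the monotone-density theorem, which is legitimate since both $\mathbb{P}(\sigma>n)$ and $\mathbb{P}(\nu>n)$ are nonincreasing in $n$. This gives $\mathbb{P}(L_{n}\geq0)=\mathbb{P}(\sigma>n)\sim n^{\rho-1}\Lambda(n)/\Gamma(\rho)$ and $\mathbb{P}(M_{n}<0)=\mathbb{P}(\nu>n)\sim n^{-\rho}\widetilde{\Lambda}(n)/\Gamma(1-\rho)$. Putting $l_{1}(n):=\Lambda(n)/\Gamma(\rho)$ yields the first relation in (\ref{AsymMin}); since $\widetilde{\Lambda}(n)=1/\Lambda(n)$, the second becomes $\mathbb{P}(M_{n}<0)\sim\kappa/(n^{\rho}l_{1}(n))$ with $\kappa=1/\bigl(\Gamma(\rho)\Gamma(1-\rho)\bigr)=\pi^{-1}\sin(\pi\rho)>0$.

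I expect the only genuinely delicate point to be the slow variation of the correction factors $\Lambda,\widetilde{\Lambda}$ — that is, controlling $\exp(\sum_{n}\tfrac{z^{n}}{n}\varepsilon_{n})$ as $z\uparrow1$ knowing only $\varepsilon_{n}\to0$ — together with the (here immediate) verification of the eventual monotonicity needed for the monotone-density step; everything else is routine manipulation of standard fluctuation identities and Tauberian theorems. Alternatively, the statement can simply be quoted from the literature, as is done here via \cite{4h}.
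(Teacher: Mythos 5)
The paper itself offers no proof: the lemma is simply quoted from Lemma~2.1 of \cite{4h}, as the text indicates. So you are supplying the derivation rather than reproducing an argument in the paper. Your route --- Sparre--Andersen identities for the weak descending and weak ascending first-passage times, the Spitzer condition $\mathbb{P}(S_n>0)\to\rho$ coming from Hypothesis~A2, the Rogozin-type lemma turning $\exp\bigl(\sum_{n}\tfrac{z^{n}}{n}\varepsilon_{n}\bigr)$ with $\varepsilon_n\to 0$ into a slowly varying factor of $(1-z)^{-1}$, then Karamata's Tauberian theorem together with the monotone-density theorem, and finally the product identity forcing $\Lambda\widetilde\Lambda\equiv 1$ which pins down $\kappa=\sin(\pi\rho)/\pi$ --- is correct and is in substance the standard argument underlying the cited result. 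Nothing essential is missing.

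One small overclaim is worth fixing. Nonlattice does \emph{not} imply $\mathbb{P}(S_n=0)=0$: a nonlattice increment law may perfectly well have an atom at $0$ (e.g.\ a mixture of $\delta_0$ with a continuous law), and then $\mathbb{P}(S_n=0)>0$ for every $n$. What your argument actually needs is only that $\mathbb{P}(S_n=0)\to 0$, so that $\mathbb{P}(S_n\geq 0)$ and $\mathbb{P}(S_n>0)$ share the limit $\rho$. This weaker fact holds for any nondegenerate walk, for instance because $\sup_{x}\mathbb{P}(S_n=x)\to 0$ (concentration-function bound), or here more directly because $S_n/c_n$ converges to a stable law with a density. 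With that correction the proof goes through unchanged, and both relations in (\ref{AsymMin}) follow with $l_1(n)=\Lambda(n)/\Gamma(\rho)$.
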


The next lemma describe asymptotic behavior of some conditional functionals.
Denote%
\begin{equation}
\mathfrak{m}_{1\theta }:=\int_{0}^{\infty }e^{-\theta z}U(z)\,dz,\quad
\mathfrak{m}_{2\theta }:=\int_{-\infty }^{0}e^{\theta z}V(z)\,dz
\label{def_ci}
\end{equation}%
for $\theta>0$ and introduce the measures%
\begin{equation*}
u_{\theta }(z):=\mathfrak{m}_{1\theta }^{-1}U(z)\,I\left\{ z\geq 0\right\}
dz,\quad v_{\theta }(z):=\mathfrak{m}_{2\theta }^{-1}V(z)\,I\left\{
z<0\right\} dz.
\end{equation*}

\begin{lemma}
\label{L_Exponent_cond}(see Proposition 2.1 in \cite{ABKV} ) If Hypothesis
A2 is valid then there exists a positive constants $K$ such that for each $%
\theta >0$, as $n\rightarrow \infty $
\begin{equation}
\mathbb{E}_{x}\left[ e^{-\theta S_{n}};L_{n}\geq 0\right] \sim \frac{K}{%
nc_{n}}U(x)\mathfrak{m}_{2\theta },\quad x\geq 0,  \label{AsymConditional}
\end{equation}%
and
\begin{equation}
\mathbb{E}_{x}\left[ e^{\theta S_{n}};\tau (n)=n\right] \sim \frac{K}{nc_{n}}%
V(x)\mathfrak{m}_{1\theta }\quad x\leq 0.  \label{AsymConditional_max}
\end{equation}
\end{lemma}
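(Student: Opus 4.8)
The plan rests on the elementary observation that, conditioned on $\{L_n\geq 0\}$, the walk $S_n$ is typically of order $c_n\to\infty$, so the weight $e^{-\theta S_n}$ in \eqref{AsymConditional} concentrates almost all of the mass on the atypical event $\{S_n=O(1)\}$ (and likewise $e^{\theta S_n}$ in \eqref{AsymConditional_max} localises the path near level $0$); both relations are thus local limit theorems in disguise. I would proceed in three steps: (1) reduce to a sharp local asymptotics for the conditioned walk at bounded levels; (2) integrate against the exponential weight to produce $\mathfrak{m}_{2\theta}$ (resp. $\mathfrak{m}_{1\theta}$); (3) recover the prefactor $U(x)$ (resp. $V(x)$) from the harmonicity identities \eqref{Mes1}--\eqref{Mes2}. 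It suffices to establish one of the two relations, since the reflection $S_k\mapsto -S_k$ (which swaps $U\leftrightarrow V$, ``staying nonnegative'' $\leftrightarrow$ ``staying negative'', and $\mathfrak{m}_{1\theta}\leftrightarrow\mathfrak{m}_{2\theta}$) together with the time reversal $S_k\mapsto S_n-S_{n-k}$ turns \eqref{AsymConditional} into \eqref{AsymConditional_max}; I will describe \eqref{AsymConditional}.

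For step (1) I would fix $x\geq 0$ and write $\mathbb{E}_x[e^{-\theta S_n};L_n\geq 0]=\int_0^\infty e^{-\theta y}\,\mathbb{P}_x(S_n\in dy,\,L_n\geq 0)$, then truncate at a large level $A$. On $\{S_n\geq A\}$ I would bound $e^{-\theta S_n}\leq e^{-\theta A/2}e^{-\theta S_n/2}$, so the tail is at most $e^{-\theta A/2}\,\mathbb{E}_x[e^{-\theta S_n/2};L_n\geq 0]$; a crude upper estimate of the correct order $O((nc_n)^{-1})$ for the latter — obtained by cutting the trajectory at $\lfloor n/2\rfloor$, applying Lemma \ref{L_min_max} to the first half and a one‑step moment bound to the second — makes the tail $o_A(1)\cdot(nc_n)^{-1}$, so only $y\in[0,A]$ contributes to leading order.

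On $[0,A]$ I would invoke the local limit theorem for the walk conditioned to stay nonnegative — the non‑lattice counterpart of the Vatutin--Wachtel local theorem, which is consistent with Lemma \ref{L_min_max} once the density is integrated over the meander scale $y\asymp c_n$, and which is precisely where the combination ``$U$ at the starting point, $V$ at the endpoint'' appears: uniformly for $y$ in compact subsets of $[0,\infty)$,
\[
\mathbb{P}_x(S_n\in dy,\,L_n\geq 0)\ \sim\ \frac{K}{nc_n}\,U(x)\,V(-y)\,dy,\qquad n\to\infty .
\]
Inserting this and using dominated convergence,
\[
\int_0^A e^{-\theta y}\,\mathbb{P}_x(S_n\in dy,\,L_n\geq 0)\ \sim\ \frac{K}{nc_n}\,U(x)\int_0^A e^{-\theta y}\,V(-y)\,dy\ \xrightarrow[A\to\infty]{}\ \frac{K}{nc_n}\,U(x)\,\mathfrak{m}_{2\theta},
\]
which, combined with the tail estimate, is \eqref{AsymConditional}; here $K\in(0,\infty)$ and $\mathfrak{m}_{1\theta},\mathfrak{m}_{2\theta}\in(0,\infty)$ because $U(0)=V(0)=1$, $U$ and $V$ are monotone, and $U(x)\leq C(1+x)$, $V(x)\leq C(1+|x|)$. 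If one prefers not to read the factor $U(x)$ off the local theorem, then conditioning on $\mathcal{F}_m$ and the Markov property give $\mathbb{E}_x[e^{-\theta S_n};L_n\geq 0]=\mathbb{E}_x[I\{L_m\geq 0\}\,\mathbb{E}_{S_m}[e^{-\theta S_{n-m}};L_{n-m}\geq 0]]$, whence the prefactor $\varphi(x):=\lim nc_n\,\mathbb{E}_x[e^{-\theta S_n};L_n\geq 0]$ satisfies $\varphi(x)=\mathbb{E}_x[\varphi(S_m);L_m\geq 0]$ for every $m$, i.e. $\varphi$ is $\mathbb{P}$‑harmonic for the walk killed on leaving $[0,\infty)$, hence $\varphi=\varphi(0)\,U(\cdot)$; equivalently, passing to $\mathbb{P}_x^+$ via $\mathbb{E}_x[O_n;L_n\geq 0]=U(x)\,\mathbb{E}_x^+[O_n/U(S_n)]$ pulls out $U(x)$ and leaves a starting‑point‑free statement.

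The hard part is the sharp local limit theorem used in step (1) — the one ingredient that goes beyond Lemma \ref{L_min_max} and soft harmonic‑function arguments. Its proof decomposes the trajectory at the time of its global minimum (a Wiener--Hopf factorisation) and couples the ladder‑epoch tails $\mathbb{P}(\tau^\pm>n)$ with a Stone‑type local central limit theorem for the increments; alternatively one evaluates $\sum_n z^n\,\mathbb{E}_x[e^{-\theta S_n};L_n\geq 0]$ through the Wiener--Hopf factorisation, locates its singularity as $z\uparrow 1$ from those tails, and finishes by a Tauberian theorem — either way the quantitative link between the ascending/descending ladder heights and the norming constants $c_n$ must be pinned down. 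Everything else (the tail truncation, the harmonic bookkeeping, and the passage $A\to\infty$) is routine.
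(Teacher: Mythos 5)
The paper does not prove this lemma: it is stated with the pointer ``(see Proposition 2.1 in \cite{ABKV})'' and never reproved in the text, so there is no in-paper argument to compare against. Your reconstruction has the right skeleton — reduce to a conditioned local limit theorem at bounded levels, integrate against the exponential weight to produce $\mathfrak{m}_{2\theta}$ (resp. $\mathfrak{m}_{1\theta}$), and identify the $U(x)$-prefactor via harmonicity — and this is indeed the circle of ideas used in \cite{ABKV} and in the Vatutin--Wachtel/Caravenna literature it draws on.

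There is, however, a concrete gap in your truncation step. You assert a ``crude upper estimate of the correct order $O((nc_n)^{-1})$'' for $\mathbb{E}_x[e^{-\theta S_n/2};L_n\geq 0]$, produced by cutting at $\lfloor n/2\rfloor$, applying Lemma~\ref{L_min_max} to the first half and a ``one-step moment bound'' to the second. That route can only yield a bound of order $\mathbb{P}(L_{\lfloor n/2\rfloor}\geq 0)\cdot c_{\lfloor n/2\rfloor}^{-1}\asymp n^{\rho-1}l_1(n)/c_n$, which exceeds the target $(nc_n)^{-1}$ by the diverging factor $n^{\rho}l_1(n)$; consequently $e^{-\theta A/2}$ times this bound is not $o((nc_n)^{-1})$ uniformly in $n$, and the tail is not controlled. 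The correct device is the uniform-in-$y$ upper version of the conditioned local theorem, $\mathbb{P}_x(S_n\in[y,y+1],L_n\geq 0)\leq C\,U(x)(1+y)/(nc_n)$ for all $y\geq 0$; integrating it gives both the crude global bound and the domination needed to pass to the limit on $[0,A]$. But this uniform estimate is part of the very local-limit machinery you set aside as ``the hard part,'' so the truncation is not ``routine'' independent of it. The same uniform bound is also what justifies exchanging $\lim$ and $\mathbb{E}_x$ in your harmonicity identification of $\varphi(x)=\varphi(0)U(x)$, and the uniqueness of the harmonic function $U$ for the killed walk, which you use tacitly, is itself a nontrivial input. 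Finally, your reduction of \eqref{AsymConditional_max} to \eqref{AsymConditional} via reflection and time reversal deserves a closer look: straight time reversal gives $\mathbb{E}_x[e^{\theta S_n};\tau(n)=n]=e^{\theta x}\,\mathbb{E}_0[e^{\theta S_n};M_n<0]$, an $x$-dependence through $e^{\theta x}$ rather than $V(x)$, so one must be careful about which event ($\tau(n)=n$ vs.\ $M_n<0$, strict vs.\ non-strict) and which starting-point the $V(x)$-prefactor is actually attached to before invoking the symmetry.
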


The next two lemmas are natural modifications of Lemmas 7.3 and 7.5 in \cite%
{GV2017}, Chapter 7. We fix $0<\delta <1$ and use the agreement $\delta
n:=\lfloor \delta n\rfloor $ in their formulations.

\begin{lemma}
\label{L_Newp41}. Let $W_{n}:=w_{n}(F_{1},\ldots ,F_{\delta n})$, $n\in
\mathbb{N}$, be random variables with values in an Euclidean (or Polish)
space $\mathcal{W}$ such that, as $n\rightarrow \infty $
\begin{equation*}
W_{n}\ \rightarrow \ W_{\infty }\quad \mathbb{P}^{+}\text{-a.s.}
\end{equation*}%
for some $\mathcal{W}$-valued random variable $W_{\infty }$. Also let $%
T_{n}:=t_{n}(F_{1},\ldots ,F_{\delta n})$, $n\geq 1$, be random variables
with values in an Euclidean (or Polish) space $\mathcal{T}$ such that, as $%
n\rightarrow \infty $,
\begin{equation*}
T_{n}\ \rightarrow \ T_{\infty }\quad \mathbb{P}_{x}^{-}\text{-a.s. }
\end{equation*}%
for all $x\leq 0$ and some $\mathcal{T}$-valued random variable $T_{\infty
}. $ Denote
\begin{equation*}
\tilde{T}_{n}:=t_{n}(F_{n},\ldots ,F_{n-\delta n+1})\ .
\end{equation*}%
Let, further $\varphi :\mathcal{W}\times \mathcal{T}\times \mathbb{R}%
_{+}\rightarrow \mathbb{R}$, be a continuous function such that
\begin{equation*}
\sup_{\left( u,v,z\right) \in \mathcal{W}\times \mathcal{T}\times \mathbb{R}%
_{+}}\left\vert \varphi (u,v,z)\right\vert e^{\theta z}<\infty
\end{equation*}%
for some $\theta >0$. If Hypothesis A2 is valid then
\begin{align}
\lim_{n\rightarrow \infty }& nc_{n}\mathbb{E}[\varphi (W_{n},\tilde{T}%
_{n},S_{n})\;;\;L_{n}\geq 0]\;  \notag \\
& =\ K\iiint \varphi (u,v,-z)\mathbb{P}^{+}\left( W_{\infty }\in du\right)
\mathbb{P}_{z}^{-}\left( T_{\infty }\in dv\right) V(z)dz.  \label{Cond1}
\end{align}
\end{lemma}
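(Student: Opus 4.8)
The plan is to reduce the test function, then split the trajectory $S_{0},\dots ,S_{n}$ into a short front block carrying $W_{n}$, a long middle block, and a short back block carrying $\tilde{T}_{n}$ and the endpoint $S_{n}$, treating the front via $\mathbb{P}^{+}$, the time--reversed back via the measures $\mathbb{P}_{z}^{-}$, and the middle via the local limit theorem behind Lemma~\ref{L_Exponent_cond}. First I would reduce $\varphi$: fix $0<\theta ^{\prime }<\theta $. Since $|\varphi (u,v,z)|\le Ce^{-\theta z}$, Lemma~\ref{L_Exponent_cond} yields, for every $A>0$,
\begin{equation*}
\limsup_{n\to \infty }nc_{n}\,\mathbb{E}\bigl[\,\bigl|\varphi (W_{n},\tilde{T}_{n},S_{n})\bigr|\,;\,S_{n}>A,\ L_{n}\ge 0\,\bigr]\le Ce^{-(\theta -\theta ^{\prime })A}K\mathfrak{m}_{2\theta ^{\prime }},
\end{equation*}
which $\to 0$ as $A\to \infty $, while $\int_{-\infty }^{-A}|\varphi (u,v,-z)|V(z)\,dz\le C\int_{-\infty }^{-A}e^{\theta z}V(z)\,dz\to 0$ because $\int_{-\infty }^{0}e^{\theta z}V(z)\,dz=\mathfrak{m}_{2\theta }<\infty $. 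So it is enough to treat bounded continuous $\varphi $ supported in $\{z\le A\}$; invoking tightness of $\mathbb{P}^{+}(W_{\infty }\in \cdot )$, of $\mathbb{P}_{z}^{-}(T_{\infty }\in \cdot )$, and of $W_{n},\tilde{T}_{n}$ under the finite measures $nc_{n}\mathbb{E}[\,\cdot \,;S_{n}\le A,L_{n}\ge 0]$ (again from Lemma~\ref{L_Exponent_cond} after truncating $\mathcal{W},\mathcal{T}$ to compacts), a Stone--Weierstrass argument reduces the claim to $\varphi (u,v,z)=g(u)h(v)q(z)$ with $g,h$ bounded continuous and $q$ continuous with compact support in $[0,\infty )$.

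For the main step take $\delta <1/2$ (the case $\delta \in[1/2,1)$ is dealt with below) and put $k:=\lfloor \delta n\rfloor $, so $W_{n}$ is $\mathcal{F}_{k}$--measurable while $\tilde{T}_{n}=t_{n}(F_{n},\dots ,F_{n-k+1})$ depends only on $F_{n-k+1},\dots ,F_{n}$, disjoint from $F_{1},\dots ,F_{k}$. \emph{Front block $\{F_{1},\dots ,F_{k}\}$:} on $\{L_{k}\ge 0\}$ one rewrites an $\mathcal{F}_{k}$--expectation as $\mathbb{E}[\,\cdot \,;L_{k}\ge 0]=\mathbb{E}^{+}[\,\cdot /U(S_{k})]$; combined with the incoming boundary weight produced at time $k$ by the continuation, this turns the $g(W_{n})$--factor into a $\mathbb{P}^{+}$--expectation, and since $W_{n}\to W_{\infty }$ $\mathbb{P}^{+}$--a.s. with $g$ bounded, bounded convergence yields the factor $\int g(u)\,\mathbb{P}^{+}(W_{\infty }\in du)$. \emph{Back block $\{F_{n-k+1},\dots ,F_{n}\}$:} reverse time, $\hat{S}_{j}:=S_{n}-S_{n-j}$ for $0\le j\le k$; this is again a random walk with i.i.d. increments $\hat{X}_{j}=X_{n-j+1}$ driven by $F_{n},\dots ,F_{n-k+1}$, so $\tilde{T}_{n}$ is the ``first--$k$'' functional of the reversed environment, and, writing $z:=-S_{n}\le 0$, the shifted walk $z+\hat{S}_{j}=-S_{n-j}$ starts at $z$ and on $\{L_{n}\ge 0\}$ stays $\le 0$ for $0\le j\le k$ --- exactly the event defining $\mathbb{P}_{z}^{-}$. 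Replacing $\mathbb{P}$ by $V(z)^{-1}\mathbb{P}_{z}^{-}$ on the reversed $\sigma $--field and using $T_{n}\to T_{\infty }$ $\mathbb{P}_{z}^{-}$--a.s. for all $z\le 0$, the back block produces $V(-S_{n})\int h(v)\,\mathbb{P}_{-S_{n}}^{-}(T_{\infty }\in dv)$ together with $q(S_{n})$, evaluated in the limit at $-z$. \emph{Middle block $\{F_{k+1},\dots ,F_{n-k}\}$:} after both boundary reductions one is left with the probability that a driftless walk of length $n-2k$ passes from the boundary level at time $k$ to the level $-z$ at time $n-k$ while staying nonnegative, carrying the incoming $U$--weight and the outgoing $V$--weight; by (the local limit theorem underlying) Lemma~\ref{L_Exponent_cond} together with the regular variation $c_{n-2k}\sim(1-2\delta )^{1/\alpha }c_{n}$, this block contributes $K/(nc_{n})$ times the residual $z$--density, and the integration over the exit level $z$ against $V(z)\,dz$ emerges.

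Multiplying the three contributions and using $\varphi (u,v,-z)=g(u)h(v)q(-z)$ produces the asserted limit. Here one must verify that the apparently $\delta $--dependent normalising factors (powers of $\delta $ and of $1-2\delta $, and the slowly varying corrections) carried by the three blocks cancel --- which they must, the left--hand side being independent of $\delta $; the cancellation, and the value of all constants, are pinned down by testing the identity against $\varphi (u,v,z)=e^{-\theta z}$, for which Lemma~\ref{L_Exponent_cond} already gives the limit $K\mathfrak{m}_{2\theta }$. Finally, for $\delta \in[1/2,1)$ the front and back blocks overlap; one then runs the same scheme with an auxiliary parameter $\delta ^{\prime }<1/2$ and passes to the full functionals by means of the a.s. convergences $W_{n}\to W_{\infty }$, $T_{n}\to T_{\infty }$, which does not affect the limit because on the overlap the walk is at height of order $c_{n}$ and the nonnegativity constraint is inactive there.

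The hard part is the middle step. It needs uniform --- not merely pointwise in the starting point --- versions of the conditional local limit estimates underlying Lemma~\ref{L_Exponent_cond}, valid over the whole relevant range of the boundary levels at times $k$ and $n-k$, so that the a.s. limits $W_{n}\to W_{\infty }$ (under $\mathbb{P}^{+}$) and $T_{n}\to T_{\infty }$ (under $\mathbb{P}_{z}^{-}$, uniformly in $z\le 0$) may be pulled through the expectation. The assumption $|\varphi (u,v,z)|\le Ce^{-\theta z}$ provides exactly the domination required for these interchanges, and the reduction to compactly supported $q$ makes the attendant truncations routine; this mirrors the proof of Lemma~7.3 in \cite{GV2017}, with the novelty being the time--reversal of the back block needed to accommodate $\tilde{T}_{n}$.
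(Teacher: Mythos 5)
Your proposal takes an entirely different — and much longer — route than the paper's own proof, and it leaves a genuine gap at exactly the point the paper's argument is designed to avoid.

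The paper proves Lemma~\ref{L_Newp41} in three lines. It multiplies and divides by $e^{\theta S_n}$, writing
\begin{equation*}
nc_n\mathbb{E}[\varphi(W_n,\tilde T_n,S_n);L_n\ge 0]
=\frac{\mathbb{E}[(e^{\theta S_n}\varphi(W_n,\tilde T_n,S_n))\,e^{-\theta S_n};L_n\ge 0]}{\mathbb{E}[e^{-\theta S_n};L_n\ge 0]}\cdot nc_n\mathbb{E}[e^{-\theta S_n};L_n\ge 0].
\end{equation*}
Since $e^{\theta z}\varphi(u,v,z)$ is a bounded continuous function, the first factor converges by Lemma~7.3 of \cite{GV2017} (Chapter~7) to $\mathfrak{m}_{2\theta}^{-1}\iiint\varphi(u,v,-z)\mathbb{P}^+(W_\infty\in du)\mathbb{P}_z^-(T_\infty\in dv)V(z)\,dz$, and the second factor converges to $K\mathfrak{m}_{2\theta}$ by~(\ref{AsymConditional}). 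The exponential weight $e^{-\theta S_n}$ is precisely what makes the weighted conditional measure tight in $-S_n$ and makes the reference lemma applicable; the normalization then comes for free from Lemma~\ref{L_Exponent_cond}. In other words, the whole point of the $e^{-\theta z}$ decay hypothesis on $\varphi$ is that it lets you factor out the weight and reduce to a result that is already in the literature.

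Your plan, instead, attempts to reprove that underlying result from scratch: you split the trajectory into a front block carrying $W_n$, a back block carrying $\tilde T_n$ (treated by time reversal), and a long middle block. This is the right architecture for a direct proof, but as you yourself say, the middle block ``needs uniform --- not merely pointwise in the starting point --- versions of the conditional local limit estimates underlying Lemma~\ref{L_Exponent_cond}''; no such uniform estimate appears in the paper, and your sketch does not supply one. The cancellation of the $\delta$-dependent constants (powers of $\delta$, $1-2\delta$, and the slowly varying corrections) is also not demonstrated --- checking the identity against $\varphi=e^{-\theta z}$ is a sanity check, not a proof that a limit exists in the first place. So the proposal reproduces, in outline, the content of the cited Lemma~7.3 of \cite{GV2017}, while skipping exactly the technical core (uniform local limit theorems for the middle block) that makes such a proof work, and it misses the simple ratio trick that the paper uses to bypass all of this.
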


The following lemma is a counterpart.

\begin{lemma}
\label{L_Newp42} Let $W_{n},T_{n},\tilde{T}_{n}$, $n\in \mathbb{N}$, be as
in Lemma \ref{L_Newp41}, now fulfilling, as $n\rightarrow \infty $
\begin{equation*}
W_{n}\ \rightarrow \ W_{\infty }\quad \mathbb{P}_{x}^{+}\text{-a.s.},\quad
T_{n}\ \rightarrow \ T_{\infty }\quad \mathbb{P}^{-}\text{-a.s.}
\end{equation*}%
for all $x\geq 0$. Let, further $\varphi :\mathcal{W}\times \mathcal{T}%
\times \mathbb{R}_{-}\rightarrow \mathbb{R}$ be a continuous function such
that
\begin{equation*}
\sup_{\left( u,v,z\right) \in \mathcal{W}\times \mathcal{T}\times \mathbb{R}%
_{-}}\left\vert \varphi (u,v,z)\right\vert e^{-\theta z}<\infty
\end{equation*}%
for some $\theta >0$. If Hypothesis A2 is valid then
\begin{align}
& \lim_{n\rightarrow \infty }nc_{n}\mathbb{E}[\varphi (W_{n},\tilde{T}%
_{n},S_{n})\;;\;\tau (n)=n]\;  \notag \\
& \qquad =K\iiint \varphi (u,v,-z)\mathbb{P}_{z}^{+}\left( W_{\infty }\in
du\right) \mathbb{P}^{-}\left( T_{\infty }\in dv\right) U(z)dz.
\label{Cond3}
\end{align}
\end{lemma}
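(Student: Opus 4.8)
The plan is to deduce Lemma~\ref{L_Newp42} from Lemma~\ref{L_Newp41}, which is its exact mirror image, by combining a reflection with a time reversal of the associated walk $\mathbf{S}$. Two elementary symmetries are at work. First, since the increments are i.i.d.\ under $\mathbb{P}$, for each fixed $n$ we have $(X_{1},\dots,X_{n})\overset{d}{=}(X_{n},\dots,X_{1})$; consequently the finite walk $\check{S}_{k}:=S_{n}-S_{n-k}$, $0\le k\le n$, has the same law as $\mathbf{S}$ restricted to $[0,n]$, $\check{S}_{n}=S_{n}$, and a direct inspection of the definitions gives the exact identity $\{\tau(n)=n\}=\{S_{k}>S_{n}\ \text{for }0\le k\le n-1\}=\{\check{M}_{n}<0\}$, where $\check{M}_{n}=\max(\check{S}_{1},\dots,\check{S}_{n})$. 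Second, the walk with increments $-X_{j}$ has zero mean under $\mathbb{P}$ and lies in the domain of attraction of a two-sided stable law of the same index $\alpha$ and positivity parameter $1-\rho$, so Hypothesis~A2 carries over to it with the same norming sequence $c_{n}$; under the reflection $\mathbf{S}\mapsto-\mathbf{S}$ the harmonic functions transform as $U\leftrightarrow V$, the conditioned measures as $\mathbb{P}^{+}\leftrightarrow\mathbb{P}^{-}$ and $\mathbb{P}^{+}_{z}\leftrightarrow\mathbb{P}^{-}_{-z}$, the event $\{L_{n}\ge0\}$ becomes $\{M_{n}\le0\}$, and the constant $K$ of Lemma~\ref{L_Exponent_cond} is unchanged because under reflection \eqref{AsymConditional} turns into \eqref{AsymConditional_max}. (The minor strict-versus-weak discrepancies --- $\{M_{n}\le0\}$ versus $\{M_{n}<0\}$, and the corresponding adjustment of the ladder-height renewal functions --- are $\mathbb{P}$-negligible by the nonlattice hypothesis, since then $\mathbb{P}(S_{k}=0)=0$ for $k\ge1$.)

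Granting this dictionary, I would apply Lemma~\ref{L_Newp41} first to the reflected walk --- which turns $\{L_{n}\ge0\}$ into $\{M_{n}<0\}$, replaces $U$ and $\mathbb{P}^{+}$ by $V$ and $\mathbb{P}^{-}$ in the limit, and exchanges the two a.s.-convergence regimes so that a ``$W$-type'' functional is now required to converge $\mathbb{P}^{-}$-a.s.\ from the origin and a ``$T$-type'' functional $\mathbb{P}^{+}_{x}$-a.s.\ for all $x\ge0$, exactly as in the hypotheses of Lemma~\ref{L_Newp42}. Then I would read the resulting $\{M_{n}<0\}$-statement through the reversal $\mathbf{S}\mapsto\check{\mathbf{S}}$: since the first block $(F_{1},\dots,F_{\delta n})$ and the reversed last block $(F_{n},\dots,F_{n-\delta n+1})$ of $\check{\mathbf{S}}$ are the reversed last block and the first block of $\mathbf{S}$, the functionals $W_{n}$ and $\tilde T_{n}$ interchange their roles, $\check{S}_{n}=S_{n}$, and $\{\check{M}_{n}<0\}=\{\tau(n)=n\}$. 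Choosing in Lemma~\ref{L_Newp41} the test function $\bar\varphi(u,v,z):=\varphi(v,u,-z)$ (so that $\sup|\varphi(u,v,z)|e^{-\theta z}<\infty$ on $\mathbb{R}_{-}$ becomes $\sup|\bar\varphi(u,v,z)|e^{\theta z}<\infty$ on $\mathbb{R}_{+}$), and carrying out the substitution $z\mapsto-z$ in the triple integral together with the relabelling of the dummy variables, the conclusion of Lemma~\ref{L_Newp41} becomes
\begin{equation*}
\lim_{n\to\infty}nc_{n}\,\mathbb{E}\big[\varphi(W_{n},\tilde T_{n},S_{n});\tau(n)=n\big]
=K\iiint\varphi(u,v,-z)\,\mathbb{P}^{+}_{z}(W_{\infty}\in du)\,\mathbb{P}^{-}(T_{\infty}\in dv)\,U(z)\,dz ,
\end{equation*}
which is the assertion of the lemma.

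The whole content of the argument is the bookkeeping, and that is where the one real difficulty lies: one must verify, slot by slot, that under the composed reflection--reversal every ingredient lands in the right place --- the measures $\mathbb{P}^{+}$ and $\mathbb{P}^{-}$ together with their base points $0$ and $z$, the harmonic functions $U$ and $V$, the sign in the third argument of $\varphi$, and the exchange of $W_{n}$ with $\tilde T_{n}$ and of $W_{\infty}$ with $T_{\infty}$ --- so that the transformed statement is exactly the displayed one. The remaining points are routine: that the reflected walk inherits Hypothesis~A2 (same $\alpha$, parameter $1-\rho$, same $c_{n}$); that the event identifications hold up to $\mathbb{P}$-null sets (the nonlattice hypothesis, via $\mathbb{P}(S_{k}=0)=0$); and that the $\mathbb{P}^{\pm}$-a.s.\ convergences of $W_{n}$ and $T_{n}$ transfer through the changes of walk, which is immediate since the restriction of $\mathbb{P}^{+}$ (resp.\ $\mathbb{P}^{-}$) to $\mathcal{F}_{\delta n}$ has density $U(S_{\delta n})I\{L_{\delta n}\ge0\}$ (resp.\ $V(S_{\delta n})I\{M_{\delta n}<0\}$), covariant under the reflection. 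An alternative route, bypassing the reflection, is to repeat the proof of Lemma~\ref{L_Newp41} verbatim, now decomposing the trajectory around its overall minimum --- which on $\{\tau(n)=n\}$ is attained near the right endpoint rather than near the origin --- and invoking \eqref{AsymConditional_max} in place of \eqref{AsymConditional}; the two proofs are mirror images.
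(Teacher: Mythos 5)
Your approach is genuinely different from the paper's. The paper derives Lemma~\ref{L_Newp42} by repeating the argument for Lemma~\ref{L_Newp41} in parallel: it decomposes around the right end of the path and invokes Lemma~7.5 of \cite{GV2017} in place of Lemma~7.3, together with \eqref{AsymConditional_max} in place of \eqref{AsymConditional}. You instead propose to \emph{deduce} Lemma~\ref{L_Newp42} from Lemma~\ref{L_Newp41} by composing a reflection $\mathbf{S}\mapsto-\mathbf{S}$ with a time reversal. That is an attractive and economical idea --- it exposes the duality between the two statements and avoids re-running the whole argument --- but as written it has one genuine gap and one acknowledged piece of unfinished work.

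The gap is in your parenthetical handling of the strict-versus-weak inequality mismatch. Under the reflection, $\{L_{n}\ge0\}$ becomes $\{M_{n}\le0\}$, whereas the measure $\mathbb{P}^{-}$ and the event $\{\tau(n)=n\}$ (after time reversal, $\{\check M_{n}<0\}$) involve the \emph{strict} inequality; similarly the renewal functions $U$ and $V$ are built from strict and weak ladder epochs respectively and do not simply swap under $\mathbf{S}\mapsto-\mathbf{S}$. You dismiss this by asserting that the nonlattice hypothesis implies $\mathbb{P}(S_{k}=0)=0$ for $k\ge1$, but this is false: a nonlattice increment distribution may well have an atom at $0$ (or elsewhere), so the events $\{M_{n}\le0\}$ and $\{M_{n}<0\}$ need not agree up to $\mathbb{P}$-null sets, and the reflected $U$ is not literally $V(-\cdot)$. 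Fixing this requires either an additional continuity assumption (which the paper does not make), or a careful argument showing that the contribution of paths with ties is asymptotically negligible at the scale $1/(nc_{n})$, or a re-derivation with the ladder-height conventions tracked explicitly; none of these is supplied. Beyond that, you concede that ``the whole content of the argument is the bookkeeping'' and leave the slot-by-slot verification --- that the reflected and reversed statement lands precisely on the displayed identity, with the base points of $\mathbb{P}^{+}_{z}$ and $\mathbb{P}^{-}$ and the sign of $z$ in the right places --- as a to-do; so the proposal is a credible plan rather than a complete proof. The paper's route, which works with the event $\{\tau(n)=n\}$ directly and quotes the appropriate mirror lemma from \cite{GV2017}, sidesteps both difficulties.
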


\textbf{Proof.} The proofs of the two statements are very similar. We show
only the first one. Since $e^{\theta z}\varphi (x,y,z)$ is a bounded
continuous function, we may apply Lemma 7.3 in \cite{GV2017}, Chapter 7 and
using the definition of $\nu _{\theta }(dz)$ to conclude that, as $%
n\rightarrow \infty $
\begin{align*}
& \frac{\mathbb{E}[\varphi (W_{n},\tilde{T}_{n},S_{n})e^{\theta
S_{n}}e^{-\theta S_{n}}\;;\;L_{n}\geq 0]}{\mathbb{E}[e^{-\theta
S_{n}};L_{n}\geq 0]} \\
& \qquad \qquad \rightarrow \ \iiint e^{-\theta z}\varphi (u,v,-z)\mathbb{P}%
^{+}\left( W_{\infty }\in du\right) \mathbb{P}_{z}^{-}\left( T_{\infty }\in
dv\right) \nu _{\theta }(dz) \\
& \qquad \qquad =\mathfrak{m}_{2\theta }^{-1}\iiint \varphi (u,v,-z)\mathbb{P%
}^{+}\left( W_{\infty }\in du\right) \mathbb{P}_{z}^{-}\left( T_{\infty }\in
dv\right) V(z)\,dz.
\end{align*}%
To complete the proof of the lemma it remains to recall (\ref%
{AsymConditional}).

\section{A limit theorem for the associated random walk\label{Sec_NT}}

Set
\begin{equation*}
B_{j,n}:=b_{n}-b_{j}=\sum_{k=j}^{n-1}e^{-S_{k}},\quad 1\leq j\leq n,
\end{equation*}%
and, for a fix $\delta \in (0,1)$ put

\begin{equation*}
W_{n}:=B_{1,n\delta }=\sum_{k=1}^{\left[ n\delta \right] -1}e^{-S_{k}},\
\tilde{T}_{n}:=\sum_{k=\left[ n\delta \right] }^{n}e^{S_{n}-S_{k}}.
\end{equation*}%
Setting
\begin{equation*}
T_{n}:=\sum_{k=0}^{n-\left[ n\delta \right] }e^{S_{k}}
\end{equation*}%
we conclude by Lemma 2.7 in \cite{4h} that if Hypothesis A2 is valid then,
for any $x\geq 0$ as $n\rightarrow \infty $
\begin{equation}
W_{n}\rightarrow W_{\infty }:=\sum_{k=1}^{\infty }e^{-S_{k}}<\infty \quad
\mathbb{P}_{x}^{+}\text{-a.s.}  \label{Ginf}
\end{equation}%
and
\begin{equation}
T_{n}\rightarrow T_{\infty }:=\sum_{k=0}^{\infty }e^{S_{k}}<\infty \quad
\mathbb{P}_{-x}^{-}\text{-a.s.}  \label{Hinf}
\end{equation}

The next statement is a generalization of a theorem established in \cite%
{guivarc2001proprietes}.

\begin{lemma}
\label{L_Guiv1} Let $g:$ $[0,\infty )$ $\rightarrow \lbrack 0,\infty )$ and $%
h:$ $[0,\infty )\times \lbrack 0,\infty )\rightarrow \lbrack 0,\infty )$ \
be two nonnegative and not identically equal to zero continuous functions
such that, for all $x\geq 0,y\geq 0$
\begin{equation*}
g(x)\leq Cx^{\lambda _{1}},\ h(x,y)\leq \frac{C}{\left( 1+x+y\right)
^{\lambda _{2}}}
\end{equation*}%
for some $0<\lambda _{1}<\lambda _{2}~$\ and a constant $C>0$.

If $X$ belongs with respect to $\mathbb{P}$ to the domain of attraction of a
two-sided stable law with index\textbf{\ }$\alpha \in (1,2]$ then there
exist two positive constants $K_{g,h}$ and $K_{h}$ such that
\begin{equation}
\lim_{n\rightarrow \infty }nc_{n}\mathbb{E}\left[ g(a_{n})h(a_{n},B_{1,n})%
\right] =K_{g,h}\ \   \label{GuivStatement}
\end{equation}%
and
\begin{equation}
\lim_{n\rightarrow \infty }\frac{\mathbb{E}\left[ h(a_{n},B_{1,n})\right] }{%
\mathbb{P}\left( L_{n}\geq 0\right) }=K_{h}.  \label{GuivStatement2}
\end{equation}
\end{lemma}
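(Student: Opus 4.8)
The plan is to prove \eqref{GuivStatement} and \eqref{GuivStatement2} by rewriting the expectations in terms of the quantities $W_n$, $\tilde T_n$, $S_n$ introduced just above the lemma, so that Lemmas~\ref{L_Newp41} and \ref{L_Newp42} become applicable after splitting the trajectory of $\mathbf S$ on $[0,n]$ at the time $\tau(n)$ of its first minimum. Recall $a_n=e^{-S_n}$ and $B_{1,n}=b_n-b_1=\sum_{k=1}^{n-1}e^{-S_k}$. The basic idea is the standard ``two-sided'' decomposition: condition on $\tau(n)=m$, so that on $[0,m]$ the walk stays strictly above its terminal value $S_m$ (reversing time, a walk conditioned by $\tau=m$ looks like a walk conditioned to stay negative, governed by $\mathbb P^-$ and $V$) and on $[m,n]$ the walk stays $\ge S_m$ (a walk conditioned to stay nonnegative, governed by $\mathbb P^+$ and $U$). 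Then $a_n=e^{-S_n}$ and $B_{1,n}$ decompose into a part built from the increments before $\tau(n)$ and a part built from the increments after $\tau(n)$; in the scaling limit the pre-$\tau$ part of $B_{1,n}$ converges (after the obvious reversal) to a copy of $T_\infty=\sum_{k\ge 0}e^{S_k}$ under $\mathbb P^-_{z}$, while the post-$\tau$ part converges to $W_\infty=\sum_{k\ge1}e^{-S_k}$ under $\mathbb P^+$, with $S_n-S_{\tau(n)}$ having a limit law with density proportional to $U(z)$ (resp.\ $V(z)$) against $dz$ on the appropriate half-line.

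**Next I would** make this precise as follows. First, reduce to the case $\delta n$-split already set up: write $B_{1,n}=W_n + e^{-S_{[n\delta]}}\,(\text{tail})$ and note that for the limit only the exponentially-decaying-in-the-minimum contributions survive, so $B_{1,n}$ is, up to negligible terms, a continuous functional of $(W_n, S_n, \tilde T_n)$ of the form handled by the bounds on $g,h$. Because $g(x)\le Cx^{\lambda_1}$ and $h(x,y)\le C(1+x+y)^{-\lambda_2}$ with $\lambda_1<\lambda_2$, the product $g(a_n)h(a_n,B_{1,n})$ is dominated by $C\,e^{-(\lambda_2-\lambda_1)S_n}\wedge C e^{\lambda_1 S_n}$, hence by $C e^{-\theta|S_n|}$ for a suitable $\theta>0$; this is exactly the integrability/decay hypothesis needed to invoke Lemmas~\ref{L_Newp41} and \ref{L_Newp42}. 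Applying those lemmas on $\{L_n\ge0\}$ and on $\{\tau(n)=n\}$ and summing over the location $m$ of the minimum (using that $nc_n\,\mathbb E[\cdots;\tau(n)=m]$ behaves, after re-centering, like a convolution of a $\mathbb P^-$-functional over $[0,m]$ and a $\mathbb P^+$-functional over $[m,n]$, with the two halves decoupling in the limit) yields \eqref{GuivStatement} with
\[
K_{g,h}=K\iint g\bigl(e^{z}\bigr)\,\widehat h(e^{z},z,\cdot)\,\cdots,
\]
an explicit (finite, positive) double integral against $U$ and $V$; positivity follows since $g,h$ are continuous, nonnegative, and not identically $0$, and finiteness from $\mathfrak m_{1\theta},\mathfrak m_{2\theta}<\infty$ (Lemma~\ref{L_Exponent_cond}). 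For \eqref{GuivStatement2}, the same decomposition applied without the factor $g$ gives $\mathbb E[h(a_n,B_{1,n})]\sim (\text{const})\cdot nc_n^{-1}\cdot(\text{const})$; dividing by $\mathbb P(L_n\ge0)\sim l_1(n)n^{\rho-1}$ from Lemma~\ref{L_min_max} and using $c_n=n^{1/\alpha}l_1(n)$ together with the identity $\rho=1-1/\alpha$ valid for driftless stable walks, one checks the powers of $n$ and the slowly varying factors cancel, leaving a finite positive constant $K_h$.

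**The main obstacle** I expect is the uniform control needed to pass from the $\delta n$-truncated functionals $W_n,\tilde T_n$ (to which Lemmas~\ref{L_Newp41}--\ref{L_Newp42} literally apply) to the full sums $a_n$ and $B_{1,n}$, i.e.\ showing that the contributions of the ``middle'' block of increments near time $\tau(n)$ are asymptotically negligible after multiplication by $nc_n$. This is the usual technical heart of such arguments and is handled by a standard three-way split of $[0,n]$ together with the exponential decay $g(a_n)h(a_n,B_{1,n})\le Ce^{-\theta|S_n|}$ and a first-moment estimate on $\sum e^{-S_k}$ over the middle block conditioned on the endpoints being high; one then lets $\delta\downarrow0$ after taking $n\to\infty$. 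A secondary point requiring care is verifying that the limiting constants do not depend on $\delta$ (they can't, since the left-hand sides don't) and are strictly positive, which uses the non-degeneracy of $g$ and $h$ and the fact that $U,V$ are strictly positive on the relevant half-lines. Once these are in place, \eqref{GuivStatement} and \eqref{GuivStatement2} follow by assembling the pieces.
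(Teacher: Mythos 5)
Your plan for \eqref{GuivStatement} is essentially the paper's: decompose at $\tau(n)$, show the middle block $\tau(n)\in[m,n-m]$ contributes $O(\varepsilon/(nc_n))$ using the bound $g(a_n)h(a_n,B_{1,n})\le C\bigl(e^{-\lambda_1 S_n}\wedge e^{(\lambda_2-\lambda_1)S_n}\bigr)$ together with Lemma~\ref{L_Exponent_cond}, apply Lemma~\ref{L_Newp41} to the blocks with $\tau(n)=j$ and Lemma~\ref{L_Newp42} to those with $\tau(n)=n-j$, and sum. (Your written bound $Ce^{-(\lambda_2-\lambda_1)S_n}\wedge Ce^{\lambda_1 S_n}$ has the exponents swapped, but the conclusion $Ce^{-\theta|S_n|}$ is what you need.) That part of the proposal matches the paper.

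Your treatment of \eqref{GuivStatement2}, however, has a genuine gap. You claim that dropping the factor $g$ and rerunning the same decomposition gives $\mathbb{E}[h(a_n,B_{1,n})]\sim \mathrm{const}\cdot (nc_n)^{-1}$, and then that dividing by $\mathbb{P}(L_n\ge 0)\sim l_1(n)/n^{1-\rho}$ the powers cancel using $\rho=1-1/\alpha$. This fails for two reasons. First, the identity $\rho=1-1/\alpha$ is not a consequence of Hypothesis A2: $\rho$ is the positivity parameter of the two-sided stable limit and is essentially free in $(0,1)$ (it equals $1-1/\alpha$ only in the spectrally one-sided case). Second, and more fundamentally, even if one insisted on that identity the powers do not cancel: $nc_n\sim n^{1+1/\alpha}l_1(n)$ and $\mathbb{P}(L_n\ge 0)\sim l_1(n)n^{\rho-1}$, so $(nc_n\,\mathbb{P}(L_n\ge 0))^{-1}\sim n^{-(\rho+1/\alpha)}l_1(n)^{-2}\to 0$. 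The real issue is that Lemmas~\ref{L_Newp41}--\ref{L_Newp42} require the integrand to decay exponentially in $S_n$ on \emph{both} sides, which forces $S_n$ to stay near $0$ and produces the $1/(nc_n)$ scale; but $h(a_n,B_{1,n})$ alone does \emph{not} decay as $S_n\to+\infty$ (then $a_n=e^{-S_n}\to 0$ and $h(0,\cdot)$ stays bounded away from zero). Consequently $\mathbb{E}[h(a_n,B_{1,n})]$ is of the strictly larger order $\mathbb{P}(L_n\ge 0)$, and the dominant contribution comes from paths whose minimum occurs at a bounded time $j$ after which the walk runs off to $+\infty$. The paper therefore proves \eqref{GuivStatement2} by a different mechanism: it bounds $\mathbb{E}[h(a_n,B_{1,n});\tau(n)>m]\le 2\varepsilon\,\mathbb{P}(L_n\ge 0)$, and for each fixed $j$ applies the conditional limit theorem of Lemma~2.5 in \cite{4h} (a $\mathbb{P}^+$-a.s.\ convergence result normalized by $\mathbb{P}(L_n\ge 0)$, not by $nc_n$) to get $\Theta_n(t,r)\sim \mathbb{E}^+[h(0,r+e^{-t}W_\infty)]\,\mathbb{P}(L_n\ge 0)$, and then sums over $j$. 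You would need to replace your exponent-bookkeeping step by this separate argument to close the proof.
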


\textbf{Proof.} We first check the validity of (\ref{GuivStatement}). For a
fixed positive integer $m\in \lbrack 1,n/2]$ we have%
\begin{eqnarray*}
&&\mathbb{E}\left[ g(a_{n})h(a_{n},B_{1,n});\tau (n)\in \lbrack m,n-m]\right]
\\
&&\qquad \leq C_{1}\mathbb{E}\left[ \frac{e^{-\lambda _{1}S_{n}}}{%
(1+\sum_{k=1}^{n}e^{-S_{k}})^{\lambda _{2}}};\tau (n)\in \lbrack m,n-m]%
\right] \\
&&\qquad \leq C_{1}\mathbb{E}\left[ e^{\lambda _{2}S_{\tau (n)}-\lambda
_{1}S_{n}};\tau (n)\in \lbrack m,n-m]\right] \\
&&\qquad =C_{1}\sum_{j=m}^{n-m}\mathbb{E}\left[ e^{(\lambda _{2}-\lambda
_{1})S_{j}+\lambda _{1}(S_{j}-S_{n})};\tau (n)=j\right] \\
&&\qquad =C_{1}\sum_{j=m}^{n-m}\mathbb{E}\left[ e^{(\lambda _{2}-\lambda
_{1})S_{j}};\tau (j)=j\right] \mathbb{E}\left[ e^{-\lambda
_{1}S_{n-j}};L_{n-j}\geq 0\right] \\
&&\qquad =C_{1}\sum_{j=m}^{n-m}\mathbb{E}\left[ e^{(\lambda _{2}-\lambda
_{1})S_{j}};M_{j}<0\right] \mathbb{E}\left[ e^{-\lambda
_{1}S_{n-j}};L_{n-j}\geq 0\right] .
\end{eqnarray*}%
where we have used the duality principle for random walks for the last
transition. By Lemma \ref{L_Exponent_cond} there exist constants $C_{1}$ and
$C_{2}$ such that
\begin{equation*}
\mathbb{E}\left[ e^{(\lambda _{2}-\lambda _{1})S_{n}};M_{n}<0\right] \leq
\frac{C_{1}}{nc_{n}},\quad \mathbb{E}\left[ e^{-\lambda _{1}S_{n}};L_{n}\geq
0\right] \leq \frac{C_{2}}{nc_{n}}.
\end{equation*}%
for all $n.$ Thus, one can find constants $C,C_{1}$ and $C_{2}$ such that,
for all $m\in \lbrack 1,n/2]$%
\begin{eqnarray*}
\sum_{j=m}^{n-m}\mathbb{E}\left[ e^{(\lambda _{2}-\lambda _{1})S_{j}};M_{j}<0%
\right] \mathbb{E}\left[ e^{-\lambda _{1}S_{n-j}};L_{n-j}\geq 0\right] &\leq
&C\sum_{j=m}^{n-m}\frac{1}{jc_{j}}\frac{1}{\left( n-j\right) c_{n-j}} \\
&\leq &C_{1}\frac{2}{nc_{n/2}}\sum_{j=m}^{\infty }\frac{1}{jc_{j}}\leq \frac{%
C_{2}}{nc_{n}},
\end{eqnarray*}%
where we have used (\ref{Def_a}) and properties of regularly varying
finctions to justify the last inequality. By this estimate it is not
difficult to conclude that, for any $\varepsilon >0$ there exists a positive
integer $m=m(\varepsilon )$ such that%
\begin{equation}
\mathbb{E}\left[ g(a_{n})h(a_{n},B_{1,n});\tau (n)\in \lbrack m,n-m]\right]
\leq \frac{\varepsilon }{nc_{n}}  \label{Term1}
\end{equation}%
for all sufficiently large $n.$

Let $\mathcal{F}_{j+1,n}$ be the $\sigma $-algebra generated by the random
variables $X_{j+1},...,X_{n}$. Taking the conditional expectation with
respect to $\mathcal{F}_{j+1,n}$, we obtain%
\begin{equation*}
\mathbb{E}\left[ g(a_{n})h(a_{n},B_{1,n});\tau (n)=j\right] =\mathbb{E}\left[
\Upsilon _{n-j}\left( S_{j},B_{1,j}\right) ;\tau (j)=j\right]
\end{equation*}%
where%
\begin{equation}
_{n}\left( t,r\right) :=\mathbb{E}\left[
g(e^{-t}a_{n})h(e^{-t}a_{n},r+e^{-t}b_{n});L_{n}\geq 0\right] .
\label{Def_II}
\end{equation}%
We fix $\delta \in (0,1),$ write
\begin{equation*}
b_{n}=1+W_{n}+e^{-S_{n}}\tilde{T}_{n}
\end{equation*}%
and, for fixed $t\geq 0,r\geq 0$ introduce the function%
\begin{equation}
\varphi
_{t,r}(u,v,z):=g(e^{-t}e^{-z})h(e^{-t}e^{-z},r+e^{-t}(1+u)+e^{-t}e^{-z}v).
\label{DefFi}
\end{equation}%
By our conditions
\begin{eqnarray*}
0\leq \varphi (u,v,z)e^{\lambda _{1}z} &\leq &C^{2}\frac{e^{-\lambda _{1}t}}{%
\left( 1+e^{-t}e^{-z}+r+e^{-t}(1+u)+e^{-t}e^{-z}v\right) ^{\lambda _{2}}} \\
&\leq &C^{2}e^{-\lambda _{1}t}.
\end{eqnarray*}%
Thus,
\begin{equation}
\sup_{\left( u,v,z\right) \in \mathcal{W}\times \mathcal{T}\times \mathbb{R}%
_{+}}\varphi _{t,r}(u,v,z)e^{\lambda _{1}z}\leq C^{2}e^{-\lambda _{1}t}.
\label{Boundfi}
\end{equation}%
Since
\begin{equation*}
\varphi (W_{n},\tilde{T}%
_{n},S_{n})=g(e^{-t}e^{-S_{n}})h(e^{-t}e^{-S_{n}},r+e^{-t}(1+W_{n})+e^{-t}e^{-S_{n}}%
\tilde{T}_{n}),
\end{equation*}%
we may apply, basing on (\ref{Ginf}), (\ref{Hinf}) and (\ref{Boundfi}),
Lemma \ref{L_Newp41} to conclude that%
\begin{eqnarray*}
\lim_{n\rightarrow \infty }nc_{n}\Upsilon _{n}\left( t,r\right)
&=&\lim_{n\rightarrow \infty }nc_{n}\mathbb{E}\left[ \varphi _{t,r}(W_{n},%
\tilde{T}_{n},S_{n});L_{n}\geq 0\right] \\
&=&K\iiint \varphi _{t,r}(u,v,-z)\mathbb{P}^{+}\left( W_{\infty }\in
du\right) \mathbb{P}_{z}^{-}\left( T_{\infty }\in dv\right) V(z)dz\  \\
&=&:K\Pi \left( t,r\right) >0.
\end{eqnarray*}%
In view of (\ref{Boundfi}) and (\ref{AsymConditional}) there exists a
constant $C_{1}$ such that
\begin{equation*}
nc_{n}\Upsilon _{n}\left( t,r\right) \leq C^{2}e^{-\lambda _{1}t}\mathbb{E}%
\left[ e^{-\lambda _{1}S_{n}};L_{n}\geq 0\right] \leq C_{1}e^{-\lambda _{1}t}
\end{equation*}%
for all $n$. Hence, using the dominated convergence theorem we deduce that

\begin{equation}
\lim_{n\rightarrow \infty }nc_{n}\mathbb{E}\left[ g(a_{n})h(a_{n},B_{1,n});%
\tau (n)=j\right] =K\mathbb{E}\left[ \Pi \left( S_{j},B_{1,j}\right) ;\tau
(j)=j\right] =:K\Psi _{j}^{+}>0.  \label{Term2}
\end{equation}

We now fix $j$ and, making the substitution $j\leftrightarrows n-j$ anr
taking the expectation with respect to the $\sigma $-algebra $\mathcal{F}%
_{n-j,n}$, rewrite (\ref{Expect}) as
\begin{equation*}
\mathbb{E}\left[ g(a_{n})h(a_{n},B_{1,n});\tau (n)=n-j\right] =\mathbb{E}%
\left[ \Upsilon _{j}\left( S_{n-j},B_{1,n-j}\right) ;\tau (n-j)=n-j\right] .
\end{equation*}%
By (\ref{Estgh})%
\begin{eqnarray}
\Upsilon _{j}\left( t,r\right) &=&\mathbb{E}\left[
g(e^{-t}a_{j})h(e^{-t}a_{j},r+e^{-t}b_{j});L_{j}\geq 0\right]  \notag \\
&\leq &C^{2}\mathbb{E}\left[ \frac{e^{-\lambda _{1}t}a_{j}^{\lambda _{1}}}{%
\left( 1+e^{-t}a_{j}+r+e^{-t}b_{j}\right) ^{\lambda _{2}}};L_{j}\geq 0\right]
\notag \\
&\leq &C^{2}e^{(\lambda _{2}-\lambda _{1})t}\mathbb{E}\left[ e^{-\lambda
_{1}S_{j}};L_{j}\geq 0\right] \leq C^{2}e^{(\lambda _{2}-\lambda _{1})t}.
\label{Bond0}
\end{eqnarray}%
We fix, as before a $\delta \in \left( 0,1\right) $ and introduce the
notation%
\begin{equation*}
\Upsilon _{j}\left( S_{n},B_{1,n}\right) =\Upsilon _{j}\left(
S_{n},W_{n}+e^{-S_{n}}\tilde{T}_{n}\right) =:\varphi (W_{n},\tilde{T}%
_{n},S_{n}).
\end{equation*}%
Since%
\begin{equation*}
\sup_{\left( u,v,z\right) \in \mathcal{W}\times \mathcal{T}\times \mathbb{R}%
_{-}}\varphi (u,v,z)e^{-(\lambda _{2}-\lambda _{1})z}\leq C^{2}.
\end{equation*}%
by (\ref{Bond0}), we may, recalling (\ref{Ginf}) and (\ref{Hinf}) apply
Lemma \ref{L_Newp42} and conclude that%
\begin{eqnarray}
&&\lim_{n\rightarrow \infty }nc_{n}\mathbb{E}\left[ \Upsilon _{j}\left(
S_{n},W_{n}+e^{-S_{n}}\tilde{T}_{n}\right) ;\tau (n)=n\right]  \notag \\
&=&K\iiint \Upsilon _{j}\left( -z,u+e^{z}v\right) \mathbb{P}_{z}^{+}\left(
W_{\infty }\in du\right) \mathbb{P}^{-}\left( T_{\infty }\in dv\right)
U(z)dz(z)dz  \notag \\
\quad &:&=K\Psi _{j}^{-}>0.  \label{Term33}
\end{eqnarray}%
Combining (\ref{Term1}) - (\ref{Term33}) gives%
\begin{equation}
\lim_{n\rightarrow \infty }nc_{n}\mathbb{E}\left[ g(a_{n})h(a_{n},B_{1,n})%
\right] =K\left( \sum_{j=0}^{\infty }\Psi _{j}^{+}+\sum_{j=0}^{\infty }\Psi
_{j}^{-}\right) =:K_{g,h}>0.  \label{repKgh}
\end{equation}

The fact that $K_{g,h}<\infty $ follows from (\ref{Term1}), the estimates%
\begin{eqnarray*}
&&\sum_{j=0}^{m}\mathbb{E}\left[ g(a_{n})h(a_{n},B_{1,n});\tau (n)=j\right]
\\
&\leq &C\sum_{j=0}^{m}\mathbb{E}\left[ e^{(\lambda _{2}-\lambda
_{1})S_{j}};M_{j}<0\right] \mathbb{E}\left[ e^{-\lambda
_{1}S_{n-j}};L_{n-j}\geq 0\right]
\end{eqnarray*}%
and%
\begin{eqnarray*}
&&\sum_{j=0}^{m}\mathbb{E}\left[ g(a_{n})h(a_{n},B_{1,n});\tau (n)=n-j\right]
\\
&\leq &C\sum_{j=0}^{m}\mathbb{E}\left[ e^{(\lambda _{2}-\lambda
_{1})S_{n-j}};M_{n-j}<0\right] \mathbb{E}\left[ e^{-\lambda
_{1}S_{j}};L_{j}\geq 0\right] ,
\end{eqnarray*}%
valid for each fixed $m\in \lbrack 1,n/2]$, and Lemma \ref{L_Exponent_cond}.

This proves (\ref{GuivStatement}).

We now justify (\ref{GuivStatement2}). Clearly, for any $m\in \lbrack 1,n-1]$%
\begin{eqnarray*}
\mathbb{E}\left[ h(a_{n},B_{1,n});\tau (n)>m\right] &\leq &C\mathbb{E}\left[
\frac{1}{\left( 1+a_{n}+b_{n}\right) ^{\lambda _{2}}};\tau (n)>m\right] \\
&\leq &C\mathbb{E}\left[ e^{\lambda _{2}S_{\tau (n)}};\tau (n)>m\right] \\
&=&C\sum_{k=m}^{n}\mathbb{E}\left[ e^{\lambda _{2}S_{\tau (n)}};\tau (n)=k%
\right] \\
&=&C\sum_{k=m}^{n}\mathbb{E}\left[ e^{\lambda _{2}S_{\tau (n)}};\tau (k)=k%
\right] \mathbb{P}\left( L_{n-k}\geq 0\right) .
\end{eqnarray*}%
Recalling (\ref{AsymMin}) and (\ref{AsymConditional_max}) we conclude that,
for $\varepsilon >0$ there exists $m=m(\varepsilon )$ such that%
\begin{eqnarray}
&&\left( \sum_{k=m}^{\left[ n/2\right] \ \ \ \ }+\sum_{k=\left[ n/2\right] \
\ \ \ }^{n}\right) \mathbb{E}\left[ e^{\lambda _{2}S_{k}};\tau (k)=k\right]
\mathbb{P}\left( L_{n-k}\geq 0\right)  \notag \\
&\leq &\mathbb{P}\left( L_{n-\left[ n/2\right] }\geq 0\right) \sum_{k=m}^{%
\left[ n/2\right] }\frac{C_{1}}{kc_{k}}+\frac{C_{1}}{nc_{n}}\sum_{j=0}^{%
\left[ n/2\right] }\mathbb{P}\left( L_{j}\geq 0\right)  \notag \\
&\leq &\varepsilon \mathbb{P}\left( L_{n}\geq 0\right) +\frac{C_{1}}{nc_{n}}n%
\mathbb{P}\left( L_{n}\geq 0\right) \leq 2\varepsilon \mathbb{P}\left(
L_{n}\geq 0\right) .  \label{Term4}
\end{eqnarray}%
On the other hand, for each fixed $j\in \lbrack 0,m]$%
\begin{equation*}
\mathbb{E}\left[ h(a_{n},B_{1,n});\tau (n)=j\right] =\mathbb{E}\left[ \Theta
_{n-j}(S_{j},B_{1,j});\tau (j)=j\right]
\end{equation*}%
where
\begin{equation*}
\Theta _{n}(t,r):=\mathbb{E}\left[ h(e^{-t}a_{n},r+e^{-t}b_{n});L_{n}\geq 0%
\right] .
\end{equation*}%
We know that, as $n\rightarrow \infty $ $S_{n}\rightarrow +\infty $ and $%
b_{n}\rightarrow 1+W_{\infty }$\thinspace\ $\mathbb{P}^{+}-$a.s. Since $%
h(x,y)$ is continuous and uniformly bounded for $x\geq 0,y\geq 0,$ it
follows that
\begin{equation*}
h(e^{-t}a_{n},r+e^{-t}b_{n})\rightarrow h(0,r+e^{-t}W_{\infty })\quad
\mathbb{P}^{+}-a.s.
\end{equation*}%
as $n\rightarrow \infty $. \ Thus, we may apply Lemma 2.5\ in \cite{4h} to
conclude that, as $n\rightarrow \infty $%
\begin{equation*}
\Theta _{n}(t,r)\sim \mathbb{E}^{+}\left[ h(0,r+e^{-t}W_{\infty })\right]
\mathbb{P}\left( L_{n}\geq 0\right) .
\end{equation*}%
Recalling (\ref{AsymMin}) and using the dominated convergence theorem we
deduce that, for any fixed $j$%
\begin{eqnarray*}
&&\lim_{n\rightarrow \infty }\frac{\mathbb{E}\left[
h(e^{-t}a_{n},r+e^{-t}b_{n});\tau (n)=j\right] }{\mathbb{P}\left( L_{n}\geq
0\right) } \\
&&\qquad =\lim_{n\rightarrow \infty }\mathbb{E}\left[ \frac{\Theta
_{n-j}(S_{j},B_{1,j})}{\mathbb{P}\left( L_{n-j}\geq 0\right) };\tau (j)=j%
\right] \frac{\mathbb{P}\left( L_{n-j}\geq 0\right) }{\mathbb{P}\left(
L_{n}\geq 0\right) } \\
&&\qquad =\left( \mathbb{E\times E}^{+}\right) \left[ h(0,B_{1,j}+a_{j}W_{%
\infty });\tau (j)=j\right] ,
\end{eqnarray*}%
where we have used the notation $\mathbb{E\times E}^{+}$ to indicate that $%
B_{1,j}$ and $a_{j}$ are distributed according to the measure $\mathbb{P}$
while $W_{\infty }$ is distributed according to the measure $\mathbb{P}^{+}.$
Since $\varepsilon >0$ in (\ref{Term4}) may be selected arbitrary small, we
see that%
\begin{eqnarray}
\lim_{n\rightarrow \infty }\frac{\mathbb{E}\left[ h(a_{n},B_{1,n})\right] }{%
\mathbb{P}\left( L_{n}\geq 0\right) } &=&\sum_{j=0}^{\infty }\left( \mathbb{%
E\times E}^{+}\right) \left[ h(0,B_{1,j}+a_{j}W_{\infty });\tau (j)=j\right]
\notag \\
&=&:K_{h}\in \left( 0,\infty \right) .  \label{Dop24}
\end{eqnarray}

Lemma \ref{L_Guiv1} is proved.

\section{Proofs of the main results}

\label{sec_nt2}

First we prove points 1) of all three theorems. We know from Corollary \ref%
{C_fractional} that
\begin{eqnarray*}
\mathbf{P}\left( A_{n-N}(n)\right) &=&\mathbf{E}\left[ \mathcal{T}_{n-N,n}%
\right] =\mathbf{E}\left[ \frac{a_{n-N}}{a_{n}+b_{n}-b_{n-N+1}}\frac{a_{n}}{%
a_{n}+b_{n}}\right] \\
&=&\mathbf{E}\left[ \frac{e^{S_{n}-S_{n-N}}}{%
\sum_{k=n-N+1}^{n}e^{S_{n}-S_{k}}}\frac{1}{\sum_{k=0}^{n}e^{S_{n}-S_{k}}}%
\right] .
\end{eqnarray*}%
Making the substitution%
\begin{equation*}
\hat{S}_{r}:=S_{n}-S_{n-r},r=0,1,...,n,
\end{equation*}%
and using the equality $\left\{ \hat{S}_{r},r=0,1,...,n\right\} \overset{d}{=%
}\left\{ S_{r},r=0,1,...,n\right\} $ we obtain%
\begin{equation}
\mathbf{P}\left( A_{n-N}(n)\right) =\mathbf{E}\left[ \frac{e^{\hat{S}_{N}}}{%
\sum_{r=0}^{N-1}e^{\hat{S}_{r}}}\frac{1}{\sum_{r=0}^{n}e^{\hat{S}_{r}}}%
\right] =\mathbf{E}\left[ \frac{e^{S_{N}}}{\sum_{r=0}^{N-1}e^{S_{r}}}\frac{1%
}{\sum_{r=0}^{n}e^{S_{r}}}\right] .  \label{BasicEqua}
\end{equation}%
Since $\mathbf{E}X$ $<0$,
\begin{equation}
\sum_{r=0}^{\infty }e^{S_{r}}<\infty \quad \mathbf{P}\text{-a.s.}
\label{AsConvergence0}
\end{equation}%
Therefore, we may apply the dominated convergence theorem to conclude that%
\begin{equation*}
\lim_{n\rightarrow \infty }\mathbf{P}\left( A_{n-N}(n)\right) =\mathbf{E}%
\left[ \frac{e^{S_{N}}}{\sum_{r=0}^{N-1}e^{S_{r}}}\frac{1}{%
\sum_{r=0}^{\infty }e^{S_{r}}}\right] =:r_{N}>0.
\end{equation*}

\subsection{Proof of point 2) in Theorem \protect\ref{T_strongly} (strongly
subcritical case)}

For $x,y\geq 0$ introduce the functions
\begin{equation*}
\Lambda _{i}\left( x,y\right) :=\mathbf{E}\left[ \frac{1}{%
1+y+x\sum_{r=0}^{i}e^{S_{r}}}\right]
\end{equation*}%
and
\begin{equation}
h_{i}\left( x,y\right) :=\frac{1}{1+y}\Lambda _{i}\left( x,y\right) .
\label{h_representation}
\end{equation}%
Clearly,
\begin{equation}
h_{i}\left( 0,y\right) =\frac{1}{1+y}\Lambda _{i}\left( 0,y\right) =\frac{1}{%
(1+y)^{2}}\text{ and }h_{i}\left( x,y\right) \leq \frac{1}{1+x+y},\ x,y\geq
0.  \label{h_indep}
\end{equation}%
By (\ref{AsConvergence0})%
\begin{equation}
h_{\infty }(x,y):=\lim_{i\rightarrow \infty }h_{i}\left( x,y\right) =\frac{1%
}{1+y}\mathbf{E}\left[ \frac{1}{1+y+x\sum_{r=0}^{\infty }e^{S_{r}}}\right]
>0.  \label{Positivite}
\end{equation}

Using (\ref{BasicEqua}) and making, with $\delta =1$ the standard change of
measure (\ref{Change_delta}) for the random sequence $\left\{
S_{r},r=0,1,...,n-i\right\} ,$ we obtain
\begin{eqnarray}
\mathbf{P}\left( A_{i}(n)\right) &=&\gamma ^{n-i}\mathbb{E}\left[ \frac{1}{%
\sum_{r=0}^{n-i-1}e^{S_{r}}}\Lambda _{i}\left(
e^{S_{n-i}},\sum_{r=1}^{n-i-1}e^{S_{r}}\right) \right]  \notag \\
&=&\gamma ^{n-i}\mathbb{E}\left[ h_{i}\left(
e^{S_{n-i}},\sum_{r=1}^{n-i-1}e^{S_{r}}\right) \right].  \label{changeni}
\end{eqnarray}%
Note that after the change of measure the independent increments $%
X_{1},...,X_{n-i}$ of the random walk $\mathbf{S}$ are distributed according
to the law $\mathbb{P}(dx):=e^{x}\mathbf{P}\left( dx\right) $ while the
independent increments $X_{n-i+1},...,X_{n}$ are distributed according to
the law $\mathbf{P}\left( dx\right) $. Since
\begin{equation*}
\mathbb{E}X=\frac{\mathbf{E}Xe^{X}}{\mathbf{E}X}<0,
\end{equation*}%
it follows that, as $N\rightarrow \infty $
\begin{equation*}
\sum_{r=1}^{N-1}e^{S_{r}}\rightarrow \sum_{r=1}^{\infty }e^{S_{r}}<\infty
\text{ \ }\mathbb{P}\text{-a.s.}
\end{equation*}%
and $e^{S_{N}}\rightarrow 0$ $\mathbb{P-}a.s.$ These estimates, (\ref%
{AsConvergence0}) and the dominated convergence theorem give%
\begin{eqnarray*}
\lim_{n-i\rightarrow \infty }\gamma ^{-(n-i)}\mathbf{P}\left(
A_{i}(n)\right) &=&\mathbb{E}\left[ \lim_{n-i\rightarrow \infty }\frac{1}{%
\sum_{r=0}^{n-i-1}e^{S_{r}}}\Lambda _{i}\left(
e^{S_{n-i}},\sum_{r=1}^{n-i-1}e^{S_{r}}\right) \right] \\
&=&\mathbb{E}\left[ \frac{1}{\sum_{r=0}^{\infty }e^{S_{r}}}\Lambda
_{i}\left( \lim_{n-i\rightarrow \infty }e^{S_{n-i}},\lim_{n-i\rightarrow
\infty }\sum_{r=1}^{n-i-1}e^{S_{r}}\right) \right] \\
&=&\mathbb{E}\left[ \frac{1}{\sum_{r=0}^{\infty }e^{S_{r}}}\Lambda
_{i}\left( 0,\sum_{r=1}^{\infty }e^{S_{r}}\right) \right] \\
&=&\mathbb{E}\left[ \frac{1}{\left( \sum_{r=0}^{\infty }e^{S_{r}}\right) ^{2}%
}\right] .
\end{eqnarray*}

Point 2) of Theorem \ref{T_strongly} is proved.

\subsection{Proof of point 2) in Theorem \protect\ref{T_intermediate}
(intermediate subcritical case)}

\subsubsection{The case of fixed $i$}

We again make the change of measure (\ref{Change_delta}) with $\delta =1$
and rewrite (\ref{changeni}) as
\begin{equation}
\mathbf{P}\left( A_{i}(n)\right) =\gamma ^{n-i}\mathbb{E}\left[ h_{i}\left(
e^{-\bar{S}_{n-i}},\sum_{r=1}^{n-i-1}e^{-\bar{S}_{r}}\right) \right] ,
\label{new1}
\end{equation}%
where $\left\{ \bar{S}_{r},r=0,1,...,n-i\right\} =\left\{
-S_{r},r=0,1,...,n-i\right\} $. Since $\bar{X}\overset{d}{=}-X$ with respect
to the measure $\mathbb{P}$, it follows that
\begin{equation*}
\lim_{n\rightarrow \infty }\mathbb{P}(\bar{S}_{r}>0)=1-\lim_{n\rightarrow
\infty }\mathbb{P}(S_{n}<0)=1-\rho .
\end{equation*}%
This relation, Lemma \ref{L_min_max}, (\ref{Dop24}) and (\ref{h_indep})
imply for $\bar{L}_{n}:=\min (\bar{S}_{0},\bar{S}_{1},\ldots ,\bar{S}_{n})$%
\begin{eqnarray*}
\lim_{n\rightarrow \infty }\frac{\mathbf{P}\left( A_{i}(n)\right) }{\gamma
^{n-i}\mathbb{P}\left( \bar{L}_{n}\geq 0\right) } &=&\sum_{j=0}^{\infty
}\left( \mathbb{E\times E}^{+}\right) \left[ h_{i}(0,B_{1,j}+a_{j}W_{\infty
});\tau (j)=j\right] \\
&=&\sum_{j=0}^{\infty }\left( \mathbb{E\times E}^{+}\right) \left[ \frac{1}{%
\left( B_{1,j}+a_{j}W_{\infty }\right) ^{2}};\tau (j)=j\right] >0.
\end{eqnarray*}

\subsubsection{The case $\min \left( i,n-i\right) \rightarrow \infty $}

We write an equlvalent form of (\ref{new1})
\begin{equation}
\mathbf{P}\left( A_{i}(n)\right) =\gamma ^{n-i}\mathbb{E}\left[ h_{i}\left(
e^{-\bar{S}_{n-i}},\sum_{r=1}^{n-i-1}e^{-\bar{S}_{r}}\right) \right] =%
\mathbb{E}\left[ \frac{1}{\sum_{r=1}^{n-i-1}e^{-\bar{S}_{r}}}\Lambda
_{i}\left( e^{-\bar{S}_{n-i}},\sum_{r=1}^{n-i-1}e^{-\bar{S}_{r}}\right) %
\right] .  \label{Dop1}
\end{equation}%
Observe now that, for any $1\leq m<i$%
\begin{eqnarray}
0 &\leq &\Lambda _{m}\left( x,y\right) -\Lambda _{i}\left( x,y\right) =%
\mathbf{E}\left[ \frac{x\sum_{r=m+1}^{i}e^{S_{r}}}{1+y+x%
\sum_{r=0}^{m}e^{S_{r}}}\frac{1}{1+y+x\sum_{r=0}^{i}e^{S_{r}}}\right]  \notag
\\
&\leq &\mathbf{E}\left[ \frac{1}{1+y+x\sum_{r=0}^{m}e^{S_{r}}}\frac{%
\sum_{r=m+1}^{i}e^{S_{r}}}{\sum_{r=0}^{i}e^{S_{r}}}\right] \leq \frac{1}{%
1+x+y}\mathbf{E}\left[ \frac{\sum_{r=m+1}^{i}e^{S_{r}}}{%
\sum_{r=0}^{i}e^{S_{r}}}\right] .  \label{difLambda}
\end{eqnarray}%
Hence, using (\ref{GuivStatement2}) with $h(x,y)=\left( 1+x+y\right) ^{-1}$
and recalling Lemma \ref{L_min_max} we conclude that
\begin{eqnarray}
\Gamma _{m}(i,n-i):= &&\mathbb{E}\left[ h_{m}\left( e^{-\bar{S}%
_{n-i}},\sum_{r=1}^{n-i-1}e^{-\bar{S}_{r}}\right) \right] -\mathbb{E}\left[
h_{i}\left( e^{-\bar{S}_{n-i}},\sum_{r=1}^{n-i-1}e^{-\bar{S}_{r}}\right) %
\right]  \notag \\
&\leq &\mathbb{E}\left[ \frac{1}{1+\sum_{r=1}^{n-i-2}e^{-\bar{S}_{r}}+e^{-%
\bar{S}_{n-j-1}}}\right] \mathbf{E}\left[ \frac{\sum_{r=m+1}^{i}e^{S_{r}}}{%
\sum_{r=0}^{i}e^{S_{r}}}\right]  \notag \\
&=&\mathbb{E}\left[ h\left( e^{-\bar{S}_{n-j-1}},\sum_{r=1}^{n-i-2}e^{-\bar{S%
}_{r}}\right) \right] \mathbf{E}\left[ \frac{\sum_{r=m+1}^{i}e^{S_{r}}}{%
\sum_{r=0}^{i}e^{S_{r}}}\right]  \notag \\
&\leq &C\mathbb{P}\left( \bar{L}_{n-i}\geq 0\right) \mathbf{E}\left[ \frac{%
\sum_{r=m+1}^{i}e^{S_{r}}}{\sum_{r=0}^{i}e^{S_{r}}}\right] .  \label{dop2}
\end{eqnarray}%
Combining this estimate with (\ref{AsConvergence0}) we obtain%
\begin{equation*}
\lim_{m\rightarrow \infty }\limsup_{\min (i,n-i)\rightarrow \infty }\frac{%
\Gamma _{m}(i,n-i)}{\mathbb{P}\left( \bar{L}_{n-i}\geq 0\right) }\leq
C\lim_{m\rightarrow \infty }\mathbf{E}\left[ \frac{\sum_{r=m+1}^{\infty
}e^{S_{r}}}{\sum_{r=0}^{\infty }e^{S_{r}}}\right] =0.
\end{equation*}%
On the other hand, according to (\ref{Dop24}) and (\ref{h_indep})
\begin{eqnarray}
\lim_{n-i\rightarrow \infty }\frac{\mathbb{E}\left[ h_{m}\left( e^{-\bar{S}%
_{n-i}},\sum_{r=1}^{n-i-1}e^{-\bar{S}_{r}}\right) \right] }{\mathbb{P}\left(
\bar{L}_{n-i}\geq 0\right) } &=&\sum_{j=0}^{\infty }\left( \mathbb{E\times E}%
^{+}\right) \left[ h_{m}(0,B_{1,j}+a_{j}W_{\infty });\tau (j)=j\right]
\notag \\
&=&\sum_{j=0}^{\infty }\left( \mathbb{E\times E}^{+}\right) \left[ \frac{1}{%
\left( B_{1,j}+a_{j}W_{\infty }\right) ^{2}};\tau (j)=j\right] .
\label{dop3}
\end{eqnarray}%
for any fixed $m$. Since $m$ may be selected arbitrary large, it follows
from (\ref{Dop1}) -- (\ref{dop3}) that
\begin{equation}
\lim_{\min (i,n-i)\rightarrow \infty }\frac{\mathbf{P}\left( A_{i}(n)\right)
}{\gamma ^{n-i}\mathbb{P}\left( \bar{L}_{n-i}\geq 0\right) }%
=\sum_{j=0}^{\infty }\left( \mathbb{E\times E}^{+}\right) \left[ \frac{1}{%
\left( B_{1,j}+a_{j}W_{\infty }\right) ^{2}};\tau (j)=j\right] .
\end{equation}%
Theorem \ref{T_intermediate} is proved.

\subsection{Proofs of points 2) and 3) in Theorem \protect\ref{T_weakly}
(weakly subcritical case)}

\bigskip

\subsubsection{The case of fixed $i$}

Using (\ref{BasicEqua}) with $N=n-i$ and changing the measure as in (\ref%
{Change_delta}) with $\delta =\beta $ we obtain
\begin{eqnarray*}
\mathbf{P}\left( A_{i}(n)\right) &=&\mathbf{E}\left[ e^{S_{n-i}}h_{i}\left(
e^{S_{n-i}},\sum_{r=1}^{n-i-1}e^{S_{r}}\right) \right] \\
&=&\gamma ^{n-i}\mathbb{E}\left[ e^{-(1-\beta )\bar{S}_{n-i}}h_{i}\left( e^{-%
\bar{S}_{n-i}},\sum_{r=1}^{n-i-1}e^{-\bar{S}_{r}}\right) \right] .
\end{eqnarray*}%
Using (\ref{GuivStatement}) with $g(x)=x^{1-\beta }$ and $h(x,y)=h_{i}(x,y)$
as in (\ref{h_representation}) and recalling (\ref{repKgh}) we get%
\begin{equation*}
\lim_{n\rightarrow \infty }(n-i)c_{n-i}\frac{\mathbf{P}\left(
A_{i}(n)\right) }{\gamma ^{n-i}}=K\left( \sum_{j=0}^{\infty }\Psi
_{ij}^{+}+\sum_{j=0}^{\infty }\Psi _{ij}^{-}\right) .
\end{equation*}%
The summands at the right-hand side have the form
\begin{equation}
\Psi _{ij}^{+}:=\mathbb{E}\left[ \Pi _{i}\left( S_{j},B_{1,j}\right) ;\tau
(j)=j\right]  \label{Psiij1}
\end{equation}%
with%
\begin{equation}
\Pi _{i}\left( t,r\right) :=\iiint \varphi _{i,t,r}(u,v,-z)\mathbb{P}%
^{+}\left( W_{\infty }\in du\right) \mathbb{P}_{z}^{-}\left( T_{\infty }\in
dv\right) V(z)dz  \label{Def_P}
\end{equation}%
and (compare with (\ref{DefFi}))%
\begin{eqnarray*}
\varphi _{i,t,r}(u,v,z):= &&e^{-(1+\beta
)(t+z)}h_{i}(e^{-(t+z)},r+e^{-t}(1+u)+e^{-(t+z)}v) \\
&=&e^{-(1+\beta )(t+z)}\frac{1}{1+r+e^{-t}(1+u)+e^{-(t+z)}v} \\
&&\times \mathbf{E}\left[ \frac{1}{1+r+e^{-t}(1+u)+e^{-(t+z)}v+e^{-(t+z)}%
\sum_{r=0}^{i-1}e^{S_{r}}}\right]
\end{eqnarray*}%
and%
\begin{equation}
\Psi _{ij}^{-}:=\iiint \Upsilon _{ij}\left( -z,u+e^{z}v\right) \mathbb{P}%
_{z}^{+}\left( W_{\infty }\in du\right) \mathbb{P}^{-}\left( T_{\infty }\in
dv\right) U(z)dz,  \label{PSii2}
\end{equation}%
where (see (\ref{Def_II}))%
\begin{equation}
\Upsilon _{ij}\left( t,r\right) :=e^{-(1-\beta )t}\mathbb{E}\left[
e^{-(1-\beta )S_{j}}h_{i}\left(
e^{-t}e^{-S_{j}},r+e^{-t}\sum_{k=0}^{j-1}e^{-S_{k}}\right) ;L_{j}\geq 0%
\right] .  \label{DefvaveII}
\end{equation}%
This proves point 2) of Theorem \ref{T_weakly} for any fixed $i$.

\subsection{The case $\min \left( i,n-i\right) \rightarrow \infty $}

We consider
\begin{equation*}
\mathbb{E}\left[ e^{-(1-\beta )\bar{S}_{n-i}}h_{i}\left( e^{-\bar{S}%
_{n-i}},\sum_{r=1}^{n-i-1}e^{-\bar{S}_{r}}\right) \right] .
\end{equation*}%
It follows the same as before, that, the limit
\begin{equation}
\lim_{n-i\rightarrow \infty }(n-i)c_{n-i}\mathbb{E}\left[ e^{-(1-\beta )\bar{%
S}_{n-i}}h_{m}\left( e^{-\bar{S}_{n-i}},\sum_{r=1}^{n-i-1}e^{-\bar{S}%
_{r}}\right) \right] =K\left( \sum_{j=0}^{\infty }\Psi
_{mj}^{+}+\sum_{j=0}^{\infty }\Psi _{mj}^{-}\right)  \label{Approximation2}
\end{equation}%
exists for each fixed $m.$ We\ see by (\ref{Positivite}) that%
\begin{equation*}
\lim_{m\rightarrow \infty }\varphi _{m,t,r}(u,v,z)=e^{-(1+\beta
)(t+z)}h_{\infty }(e^{-(t+z)},r+e^{-t}(1+u)+e^{-(t+z)}v)>0
\end{equation*}%
and%
\begin{equation*}
\lim_{m\rightarrow \infty }\Upsilon _{mj}\left( t,r\right) =e^{-(1-\beta )t}%
\mathbb{E}\left[ e^{-(1-\beta )S_{j}}h_{\infty }\left(
e^{-t}e^{-S_{j}},r+e^{-t}\sum_{k=0}^{j-1}e^{-S_{k}}\right) ;L_{j}\geq 0%
\right] >0.
\end{equation*}%
Recalling (\ref{Psiij1})-(\ref{DefvaveII}) we conclude that the limits
\begin{equation*}
\Psi _{\infty j}^{+}:=\lim_{m\rightarrow \infty }\Psi _{mj}^{+}\quad \text{%
and \quad }\Psi _{\infty j}^{-}:=\lim_{m\rightarrow \infty }\Psi _{mj}^{-}
\end{equation*}%
exist for each $j$ and are finite and positive.

Using (\ref{difLambda}) we obtain for $m<i$
\begin{eqnarray*}
0\leq h_{m}\left( x,y\right) -h_{i}\left( x,y\right) &=&\frac{1}{1+y}\left(
\Lambda _{m}\left( x,y\right) -\Lambda _{i}\left( x,y\right) \right) \\
&\leq &\frac{1}{1+x+y}\mathbf{E}\left[ \frac{\sum_{r=m+1}^{i}e^{S_{r}}}{%
\sum_{r=0}^{i}e^{S_{r}}}\right] .
\end{eqnarray*}%
Hence it follows that
\begin{eqnarray*}
\Gamma _{m}^{\prime }(i,n-i) &:&=\mathbb{E}\left[ e^{-(1-\beta )\bar{S}%
_{n-i}}\left\{ h_{m}\left( e^{-\bar{S}_{n-i}},\sum_{r=1}^{n-i-1}e^{-\bar{S}%
_{r}}\right) -h_{i}\left( e^{-\bar{S}_{n-i}},\sum_{r=1}^{n-i-1}e^{-\bar{S}%
_{r}}\right) \right\} \right] \\
&\leq &\mathbb{E}\left[ \frac{e^{-(1-\beta )\bar{S}_{n-i}}}{%
\sum_{r=0}^{n-i-1}e^{S_{r}}}\right] \mathbf{E}\left[ \frac{%
\sum_{r=m+1}^{i}e^{S_{r}}}{\sum_{r=0}^{i}e^{S_{r}}}\right] .
\end{eqnarray*}

Applying now (\ref{GuivStatement}) with $g(x):=x^{1-\beta }$ and $%
h(x,y):=(1+x+y)^{-1}$ we conclude that there exists a constant $C_{1}$ such
that
\begin{equation*}
\Gamma _{m}^{\prime }(i,n-i)\leq C_{1}(n-i)c_{n-i}\mathbf{E}\left[ \frac{%
\sum_{r=m+1}^{i}e^{S_{r}}}{\sum_{r=0}^{i}e^{S_{r}}}\right] .
\end{equation*}%
By the dominated convergence theorem we deduce that
\begin{equation*}
\lim_{m\rightarrow \infty }\limsup_{\min (i,n-i)\rightarrow \infty }\frac{%
\Gamma _{m}^{\prime }(i,n-i)}{(n-i)c_{n-i}}\leq C_1\lim_{m\rightarrow \infty
}\mathbf{E}\left[ \frac{\sum_{r=m+1}^{\infty }e^{S_{r}}}{\sum_{r=0}^{\infty
}e^{S_{r}}}\right] =0.
\end{equation*}
Since $m$ in (\ref{Approximation2}) may be selected arbitrary large, it
follows that
\begin{equation*}
\lim_{\min (i,n-i)\rightarrow \infty }\frac{\mathbf{P}\left( A_{i}(n)\right)
}{(n-i)c_{n-i}\gamma ^{n-i}}=K\left( \sum_{j=0}^{\infty }\Psi _{\infty
j}^{+}+\sum_{j=0}^{\infty }\Psi _{\infty j}^{-}\right) .
\end{equation*}

Theorem \ref{T_weakly} is proved.

\textbf{Acknowledgement} This work is supported by the Russian Science
Foundation under the grant 19-11-00111.


\begin{thebibliography}{99}
\bibitem{SV2019} C.Smadi, V.A. Vatutin, Critical branching processes in
random environment with immigration: survival of a single family. 2019 , 21
pp., arXiv: 1911.00316

\bibitem{KZS75} H. Kesten, M.V. Kozlov, F. Spitzer F., A limit law for
random walk in a random environment. \emph{Compositio Math.} 30(1975),
145-168.

\bibitem{Afan2012} V. I. Afanasyev, About time of reaching a high level by a
random walk in a random environment. \emph{Theory Probab. Appl.}, 57:4
(2013), 547-567

\bibitem{Afan2016} V.I. Afanasyev, On the time of reaching a high level by a
transient random walk in a random environment. \emph{Theory Probab. Appl.},
61(2), 178--207, 2017.

\bibitem{Afan2016b} V.I. Afanasyev, On the non-recurrent random walk in a
random environment. \emph{Discrete Math. Appl.}, 28(3), 139--156, 2018.

\bibitem{Afan2018} V. I. Afanasyev, Two-boundary problem for a random walk
in a random environment. \emph{Theory Probab. Appl.}, 63:3(2019), 339--350

\bibitem{DLVZ19} E. Dyakonova, D. Li, V.A. Vatutin, M. Zhang, Branching
processes in random environment with immigration stopped at zero. \emph{%
Journal of Appl. Probab.},\ 52 (2), 2020; \emph{arXiv: 1905.03535}, 2019.

\bibitem{LVZ19} D. Li, V.A. Vatutin, M. Zhang, Subcritical branching
processes in random environment with immigration stopped at zero. \emph{%
Journal of Theoretical Probab.}, 2020 \emph{arXiv: 1906.09590}, 2019.

\bibitem{VZ2012} V. Vatutin, X. Zheng, Subcritical branching processes in a
random environment without the Cramer condition. \emph{Stochastic Process.
Appl.}, 122:7 (2012), 2594-2609.

\bibitem{BV2017} V. Bansaye, V. Vatutin, On the survival probability for a
class of subcritical branching processes in random environment. \emph{%
Bernoulli}\textit{, }23(1), 58--88, 2017.

\bibitem{4h} V.I. Afanasyev, J. Geiger, G. Kersting, V.A. Vatutin,
Criticality for branching processes in random environment. \emph{Ann.
Probab.,} 33(2), 645-673, 2005.

\bibitem{ABKV} V.I. Afanasyev, C. Boeinghoff, G. Kersting, V.A. Vatutin,
Limit theorems for weakly subcritical branching processes in random
environment. \emph{J. Theoret. Probab.,} 25(3), 703-732, 2012.

\bibitem{GV2017} G. Kersting, V. Vatutin, Discrete Time Branching Processes
in Random Environment. \emph{ISTE \& Wiley}, 2017.

\bibitem{guivarc2001proprietes} Y. Guivarc'h, Q. Liu, Propri$\acute{e}$t$%
\acute{e}$s asymptotiques des processus de branchement en environnement al$%
\acute{e}$atoire. \emph{C. R. Acad. Sci. Paris S$\acute{e}$r. I Math.},
332(4), 339--344, 2001.
\end{thebibliography}
\end{document}